\def\End{\operatorname{End}}
\def\Orb{\operatorname{Orb}}
\def\clsp{\operatorname{\overline{span}}}
\def\range{\operatorname{range}}
\def\ker{\operatorname{ker}}
\def\id{\operatorname{id}}
\def\id{\operatorname{id}}
\def\Tor{\operatorname{Tor}}
\def\int{\operatorname{int}}
\def\St{\operatorname{St}}
\def\C{\mathbb{C}}
\def\F{\mathbb{F}}
\def\N{\mathbb{N}}
\def\Z{\mathbb{Z}}
\def\T{\mathbb{T}}
\def\Q{\mathbb{Q}}
\def\LL{\mathcal{L}}
\def\OO{\mathcal{O}}
\def\KK{\mathcal{K}}
\def\ZZ{\mathcal{Z}}
\def\UU{\mathcal{U}}
\def\QQ{\mathcal{Q}}
\def\NO{\mathcal{N}\mathcal{O}}
\newcommand{\intfrm}[1]{\Pi{#1}}
\newcommand{\nx}{\mathbb N^{\times}}
\newcommand{\qn}{\mathcal Q_\mathbb N}
\newtheorem{thm}{Theorem}[section]
\newtheorem{cor}[thm]{Corollary}
\newtheorem{lemma}[thm]{Lemma}
\newtheorem{prop}[thm]{Proposition}
\theoremstyle{definition}
\theoremstyle{remark}
\newtheorem{remark}[thm]{Remark}
\newtheorem{example}[thm]{Example}
\numberwithin{equation}{section}
\begin{document}

\title[Exel-Larsen crossed products]{Two families of Exel-Larsen crossed products}

\author[Nathan Brownlowe]{Nathan Brownlowe}
\address{School of Mathematics and Applied Statistics\\
University of Wollongong\\
NSW 2522\\
Australia}
\email{nathanb@uow.edu.au}

\author[Iain Raeburn]{Iain Raeburn}

\address{Department of Mathematics and Statistics\\
University of Otago\\PO Box 56\\ Dunedin 9054\\
New Zealand}
\email{iraeburn@maths.otago.ac.nz}

\begin{abstract}
Larsen has recently extended Exel's construction of crossed products from single endomorphisms to  abelian semigroups of endomorphisms, and here we study two families of her crossed products. First, we look at the natural action of the multiplicative semigroup $\nx$ on a compact abelian group $\Gamma$, and the induced action on $C(\Gamma)$. We prove a uniqueness theorem for the crossed product, and we find a class of connected compact abelian groups $\Gamma$ for which the crossed product is purely infinite simple. Second, we consider some natural actions of the additive semigroup $\N^2$ on the UHF cores in $2$-graph algebras, as introduced by Yang, and confirm that these actions have properties similar to those of single endomorphisms of the core in Cuntz algebras.
\end{abstract} 

\thanks{This research has been supported by the Australian Research Council and the University of Otago. The authors thank Astrid an Huef for helpful comments and suggestions.} 
\date{\today}
\maketitle

\section{Introduction}\label{sec: intro}

In \cite{e1}, Exel considered dynamical systems $(A,\alpha,L)$ consisting of an endomorphism $\alpha$ of a unital $C^*$-algebra $A$ and a transfer operator $L$ for $\alpha$, which is a positive linear map $L:A\to A$ satisfying $L(\alpha(a)b)=aL(b)$ for $a,b\in A$. To each $(A,\alpha,L)$, he associated a crossed product $C^*$-algebra $A\rtimes_{\alpha,L}\N$. Exel showed how to realise the Cuntz-Krieger algebras as Exel crossed products \cite{e1}, and the analogous construction for nonunital $A$ includes more general graph algebras \cite{brv}.

Larsen \cite{l} has recently extended Exel's construction to actions of abelian semigroups, and we will call her algebras \emph{Exel-Larsen crossed products}. They are by definition the Cuntz-Pimsner algebras of product systems of Hilbert bimodules constructed using transfer operators. Larsen considers in particular the natural action of the multiplicative semigroup $\nx=\{a\in\Z:a>0\}$ on a compact abelian group $\Gamma$, and the associated action $\alpha:\nx\to \End C(\Gamma)$ given by $\alpha_a(f)(g)=f(g^a)$. She shows that under certain conditions (see (G1--3) in \S\ref{sec: Exel-Larsen systems} below), there is a compatible action $L$ of $\nx$ by transfer operators obtained by averaging over the solutions $h$ of $h^a=g$. When $\Gamma$ is the group $\ZZ$ of integral ad\`eles, for example, her construction yields the Hecke $C^*$-algebra of Bost and Connes \cite{bc} (see \cite[Remark~5.10]{l}); when $\Gamma$ is the circle group $\T$, $C(\T)\rtimes_{\alpha,L}\nx$ is the purely infinite simple $C^*$-algebra $\QQ_\N$ of Cuntz \cite{c2} (see \cite[Theorem~5.2]{bahlr} and \cite{hls}); and Brownlowe has shown that the $C^*$-algebras of higher-rank graphs can be obtained via a modification of her construction \cite{b}.

Here we investigate two families of Exel-Larsen crossed products. The first family are the $C(\Gamma)\rtimes_{\alpha,L}\nx$ from \cite[\S5]{l}. We find general conditions on $\Gamma$ which ensure that $C(\Gamma)\rtimes_{\alpha,L}\nx$ is purely infinite and simple, which apply in particular when $\Gamma$ is a solenoid or a higher-dimensional torus. Our main tools are Yamashita's work on the $C^*$-algebras of topological higher-rank graphs \cite{y1}, which are also Cuntz-Pimsner algebras of product systems, and the  structure theory for compact groups, including some surprisingly new results about coverings of solenoids \cite{g}. The second family of interest to us involves actions of the additive semigroup $\N^2$ on the core in the $C^*$-algebras of rank-2 graphs with a single vertex. These graphs $\Lambda_\theta$ were introduced in the context of non-selfadjoint operator algebras in \cite{kribsp}, and their $C^*$-algebras have been studied in \cite{DY, Yang1, Yang2}; the actions of $\N^2$ of interest to us were discussed in \cite{Yang1}. The papers \cite{Yang1, Yang2} suggest that, when the algebra $C^*(\Lambda_\theta)$ is simple, it behaves very much like a Cuntz algebra, which can be viewed as the graph algebra of a rank-1 graph with a single vertex. We compute the Exel crossed products for these actions, in direct parallel with a result for the Cuntz algebras in \cite[Theorem~6.6]{aHR}.

We begin with a quick review of product systems in \S\ref{sec:ps}. Then in \S\ref{sec: Exel-Larsen systems} we discuss general properties of Exel-Larsen crossed products arising from compact abelian groups. Our main general results are a uniqueness theorem and a theorem which identifies a family of compact groups whose crossed products are purely infinite and simple. In \S\ref{sec: examples}, we discuss examples, including higher-dimensional tori and solenoids. The last section contains our work on Exel-Larsen crossed products associated to $2$-graphs. We need to use a theorem of Davidson and Yang about the periodicity of $2$-graphs with a single vertex, and in an Appendix we give a short graph-theoretic proof of their theorem.

\section{Product systems of Hilbert bimodules}\label{sec:ps}
We are interested in two families of $C^*$-algebras, both of which are by definition the Cuntz-Pimsner algebras of product systems of Hilbert bimodules over a fixed $C^*$-algebra. So we begin by setting out our conventions.

A Hilbert bimodule over a $C^*$-algebra $A$ is a right Hilbert $A$-module $M$ with a left action of $A$ by adjointable operators, implemented by a homomorphism $\phi$ of $A$ into the $C^*$-algebra $\LL(M)$ of adjointable operators. (In some papers, such $M$ are described as ``right-Hilbert $A$--$A$ bimodules'', or ``correspondences over $A$.'')

Suppose that $P$ is an abelian semigroup with an identity $e$; in this paper, $P$ is either the multiplicative semigroup $\nx$ of positive integers or the additive semigroup $\N^2$. A product system over $P$ is a collection $\{M_p:p\in P\}$ of Hilbert bimodules over the same $C^*$-algebra $A$ together with an associative multiplication on $\bigsqcup_{p\in P}M_p$ such that $(m,n)\mapsto mn$ induces an isomorphism of $M_p\otimes_A M_q$ onto $M_{pq}$; we always assume that the bimodule $M_e$ is ${}_AA_A$, and that the products from $M_e\times M_p\to M_p$ and $M_p\times M_e \to M_p$ are given by the module actions of $A$ on $M_p$. 

A representation $\psi=\{\psi_p:p\in P\}$ of a product system in a $C^*$-algebra $B$ consists of linear maps $\psi_p:M_p\to B$ such that each $(\psi_p,\psi_e)$ is a representation of $M_p$, and $\psi$ is Cuntz-Pimsner covariant if each $(\psi_p,\psi_e)$ is Cuntz-Pimsner covariant. The Cuntz-Pimsner algebra\footnote{The potentially different Cuntz-Pimsner algebras $\OO(M)$ of \cite{f} and $\NO(M)$ of \cite{sy} coincide in our case, because the left actions $\phi_p:A\to \LL(M_p)$ all have range in $\KK(M_p)$ and our semigroups $\nx$ and $\N^2$ are lattice-ordered in the sense that every pair has a least upper bound (see \cite[\S5.1]{sy}).} $\OO(M)$ is generated by a universal Cuntz-Pimsner covariant representation $k_M=\{k_{M,p}\}$. These algebras were introduced by Fowler \cite{f}; we follow \cite{sy} in writing $\Pi\psi$ for the representation of $\OO(M)$ satisfying $(\Pi\psi)\circ k_{M,p}=\psi_p$.

Next we suppose that $(A,P,\alpha,L)$ is an \emph{Exel-Larsen system} as in \cite{l}: thus $\alpha:P\to \End A$ is an action of $P$ by unital endomorphisms of a unital $C^*$-algebra $A$, and $L$ is an action of $P$ by transfer operators $L_p$ for $\alpha_p$. We put a bimodule structure on each $A_{L_p}=A$ by $a\cdot x\cdot b=ax\alpha_p(b)$; give it a pre-inner product defined by $\langle x,y\rangle_{L_p}=L_p(x^*y)$; mod out by the vectors of length $0$ and complete to get a right Hilbert $A$-module $M_p$ (denoted $M_{L_p}$ in \cite{br}). The left action of $A$ on $A_{L_p}$ gives a homomorphism $\phi_p$ of $A$ into the algebra $\LL(M_p)$ of adjointable operators, and we write $q_p^M(x)$ or just $q_p(x)$ for the image of $x\in A=A_{L_p}$ in $M_p$. Larsen proves\footnote{We prefer the conventions of \cite{br} to those of \cite{l}, where Larsen completes $A\alpha_p(1)$ rather than $A$. For us, $\alpha_p(1)=1$, so we can just ignore the $\alpha_p(1)$. However, one can also ignore it in general: the completion process involves modding out by vectors of norm $0$, and every $m-m\alpha_p(1)$ has norm $0$, so $q_p(x)=q_p(x\alpha_p(1))$.} in \cite[\S3.2]{l} that $M:=\bigsqcup M_p$ is a product system of Hilbert bimodules over $A$ with 
\begin{equation}\label{mult in prod sys}
q^M_p(x)q^M_r(y)=q^M_{pr}(x\alpha_p(y)).
\end{equation}
The \emph{Exel-Larsen crossed product} $A\rtimes_{\alpha,L}P$ is by definition the Cuntz-Pimsner algebra $\OO(M)$.

In the families of Exel-Larsen systems of interest to us, we can make some simplifications. The endomorphisms $\alpha_p$ are unital, and the transfer operators $L_p$ are faithful, in the sense that $L_p(a^*a)=0\Longrightarrow a=0$. The Hilbert bimodules are finitely generated as right modules, so no modding out or completing is required. In fact our bimodules $M_p$ usually have finite orthonormal bases $\{e_i:1\leq i\leq N_p\}$, and then the identity $\phi_p(a)=\sum_{i=1}^{N_p} \Theta_{a\cdot e_i,e_i}$ implies in particular that $\phi_p$ has range in the algebra $\KK(M_p)$ of compact operators.

\section{Exel-Larsen systems from compact abelian groups}\label{sec: Exel-Larsen systems}

Let $\Gamma$ be a compact abelian group, and let $\omega$ be the action of $\nx$ by endomorphisms of $\Gamma$ given by $\omega_a(g)=g^a$ for $a\in\nx$. Let $\alpha$ denote the action of $\nx$ by endomorphisms of $C(\Gamma)$ given by $\alpha_a(f)(g)=f(\omega_a(g))=f(g^a)$. We will require that
\begin{itemize}
\item[(G1)] $\omega_a(\Gamma)$ has finite index in $\Gamma$ for all $a\in\nx$;
\item[(G2)] $\ker\omega_a$ is finite for all $a\in\nx$; and
\item[(G3)] $|\ker\omega_{ab}|=|\ker\omega_a|\cdot|\ker\omega_b|$ for all $a,b\in
\nx$.
\end{itemize}
In \cite[Proposition~5.1]{l} Larsen proved that under the conditions (G1--3),
the formula
\begin{equation}\label{the to}
L_a(f)(g):=
\begin{cases}
\frac{1}{|\ker\omega_a|}\sum_{\omega_a(h)=g}f(h) & \text{if $g\in\omega_a(\Gamma)$}\\
0 & \text{otherwise}
\end{cases}
\end{equation}
gives an action $L$ of $\nx$ by transfer operators for $\alpha$. So $(C(\Gamma),\nx,\alpha,L)$ is an Exel-Larsen system.

We can now run the system $(C(\Gamma),\nx,\alpha,L)$ through Larsen's construction. We observe as in \cite[Lemma~3.3]{LR2} that $C(\Gamma)_{L_a}$ is complete in the norm defined by the inner product. The argument of \cite[Example~2.1(b)]{aijln} shows that each $M_a:=C(\Gamma)_{L_a}$ has an orthonormal basis with $N_a:=|\Gamma/\omega_a(\Gamma)|$ elements, and hence each $\phi_a$ has range in $\KK(M_a)$. Then $C(\Gamma)\rtimes_{\alpha,L}\nx:=\OO(M)$ is generated by the universal Cuntz-Pimsner covariant representation $k_M$ of the product system $M:=\bigsqcup_{a\in \nx}M_a$.

We want to use a description of $C(\Gamma)\rtimes_{\alpha,L}\nx$ as a topological-graph algebra. The topological graph in question is a topological $\infty$-graph in the sense of \cite{y1}, where the rank-$\infty$ case is explicitly allowed. This is relevant to our situation because prime factorisation gives an isomorphism of our multiplicative semigroup $\nx$ onto the additive semigroup $\N^\infty$. We choose to suppress this isomorphism by retaining multiplicative notation, and call our graph a \emph{topological $\nx$-graph}. However, the existence of the isomorphism $\nx\cong\N^\infty$ means that the results of \cite{y1} apply (as was previously done in \cite[Remark~5.6]{y1}, for example). 

To help give a feel for our conventions, we describe a standard example. Note straightaway that our multiplicative notation means that the set of vertices in $\Lambda$ is denoted $\Lambda^1$ rather than $\Lambda^0$.  

\begin{example}
The $\nx$-graph $\Omega_{\nx}$ has vertex set $\Omega_{\nx}^1=\nx$, and the morphisms are pairs $(a,b)$ in $\nx\times\nx$ such that $a$ divides $b$. The range of $(a,b)$ is $a$, the source of $(a,b)$ is $b$, and the degree of $(a,b)$ is $a^{-1}b$.
\end{example} 

We now describe the topological $\nx$-graph associated to our system $(C(\Gamma),\nx,\alpha,L)$.

\begin{prop}\label{top inf-graph} For $a\in\nx$ let $\Gamma_a:=\{(a,g):g\in\Gamma\}$, and give $\Lambda_\Gamma:=\bigcup_{a\in\nx}\Gamma_a$ the topology in which each $\Gamma_a$ is compact and open. Define $r,s:\Lambda_\Gamma \to \Lambda^1:=\Gamma$ and $d:\Lambda_\Gamma\to\nx$ by 
\[
r(a,g)=g,\quad s(a,g)=g^a\quad\text{and}\quad d(a,g)=a.
\] 
Then $\Lambda=(\Gamma,\Lambda_\Gamma,r,s,d)$ is a topological $\nx$-graph with composition $(a,g)(b,g^a):=(ab,g)$; $\Lambda$ is row-finite and has no sources.
\end{prop}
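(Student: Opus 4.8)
The plan is to read off the definition of a topological $\nx$-graph from \cite{y1} and verify it in two stages — first the algebraic category-with-degree structure, then the topological conditions — after which row-finiteness and the absence of sources are immediate.

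First I would check that $\Lambda$ is a small category with objects $\Gamma$ and morphisms $\Lambda_\Gamma$. The identity at a vertex $g$ is $(1,g)$, since $d(1,g)=1$ is the identity of $\nx$ and $r(1,g)=g=s(1,g)$. A pair $(a,g),(b,h)$ is composable precisely when $s(a,g)=r(b,h)$, i.e. $h=g^a$, and the prescribed product $(a,g)(b,g^a)=(ab,g)$ then has the correct range and source because $s(ab,g)=g^{ab}=(g^a)^b=s(b,g^a)$. The single algebraic input driving associativity and both unit laws is the identity $(g^a)^b=g^{ab}$, so these are routine. The map $d$ is a functor onto $\nx$ since $d\big((a,g)(b,g^a)\big)=ab=d(a,g)\,d(b,g^a)$. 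For the factorisation property, given $(n,g)$ and a factorisation $n=ab$ in $\nx$, any decomposition $(n,g)=(a,g')(b,h')$ forces $g'=g$ and $h'=g^a$ by comparing ranges with the product rule; hence $\mu=(a,g)$, $\nu=(b,g^a)$ is the \emph{unique} factorisation with $d(\mu)=a$, $d(\nu)=b$. This shows $\Lambda$ is an $\nx$-graph in the algebraic sense.

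Next I would establish the topological axioms. Each $\Gamma_a$ is a compact Hausdorff copy of $\Gamma$ (second countable when $\Gamma$ is) declared to be clopen, so $\Lambda_\Gamma$ is a locally compact Hausdorff space on which $d$ is continuous, being locally constant. The range map restricts on $\Gamma_a$ to the homeomorphism $(a,g)\mapsto g$, so $r$ is continuous, and $s$ restricts on $\Gamma_a$ to $\omega_a$, so $s$ is continuous as well. The one genuinely nontrivial point, which I expect to be the main obstacle, is that $s$ must be a \emph{local homeomorphism}, i.e. that $\omega_a\colon\Gamma\to\Gamma$ is a local homeomorphism onto its image. Here I would use that $\omega_a$ is a continuous homomorphism of compact groups whose kernel is finite by (G2) and whose image $\omega_a(\Gamma)$, having finite index by (G1), is closed and hence open; a surjective homomorphism of compact groups with finite (discrete) kernel is a covering of its image, and so $\omega_a$ is a local homeomorphism onto the open subgroup $\omega_a(\Gamma)$. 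A short check that composition is continuous and open finishes this stage: identifying the composable pairs of colours $(a,b)$ with $\Gamma$ via their first coordinate, composition becomes the homeomorphism $\Gamma\xrightarrow{\ \cong\ }\Gamma_{ab}$.

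Finally, row-finiteness and the absence of sources fall out of the observation that $r$ restricts to a homeomorphism $\Gamma_a\to\Gamma$ for every $a$: for each vertex $g$ and each $a\in\nx$ the set $\{\lambda:r(\lambda)=g,\ d(\lambda)=a\}$ is the singleton $\{(a,g)\}$. Thus $r\big|_{d^{-1}(a)}$ is a proper surjection (indeed a bijection), which is exactly row-finiteness together with the statement that $\Lambda$ has no sources.
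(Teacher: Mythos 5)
Your proof is correct and follows essentially the same route as the paper: routine verification of the category and factorisation axioms, continuity of $r$, $s$, $d$ and of composition, with the only substantive point being that each $\omega_a$ is a local homeomorphism onto the open subgroup $\omega_a(\Gamma)$. The sole difference is that where you invoke the standard fact that a surjection of compact groups with finite discrete kernel is a covering of its image, the paper isolates this as a separate lemma and proves it from scratch by producing a neighbourhood $U$ of the identity meeting each coset of $\ker\omega_a$ at most once; both are valid.
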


The only nontrivial part of this assertion is that the source map $s$ is a local homeomorphism, and this follows from the next lemma.

\begin{lemma}\label{varphi_a maps}
If $\Gamma$ is a compact abelian group satisfying \textup{(G1)} and \textup{(G2)}, then each $\omega_a$ is a proper local homeomorphism.
\end{lemma}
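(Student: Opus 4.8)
The plan is to exploit the group structure: since $\omega_a$ is a continuous endomorphism of the compact group $\Gamma$, its behaviour near any point is a translate of its behaviour at the identity, so the natural strategy is to factor $\omega_a$ into pieces that are each manifestly local homeomorphisms and then compose. First, note that properness comes for free: $\Gamma$ is compact and Hausdorff, so the preimage under the continuous map $\omega_a$ of any closed (hence compact) subset is a closed subset of the compact space $\Gamma$, and is therefore itself compact.

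For the local-homeomorphism assertion I would set $K:=\ker\omega_a$ and $H:=\omega_a(\Gamma)$ and factor
\[
\omega_a\colon\ \Gamma\ \xrightarrow{\ \pi\ }\ \Gamma/K\ \xrightarrow{\ \overline{\omega}_a\ }\ H\ \hookrightarrow\ \Gamma,
\]
where $\pi$ is the quotient map, $\overline{\omega}_a$ is the induced map, and the last arrow is the inclusion. The first step is $\pi$. By (G2) the subgroup $K$ is finite, hence discrete and closed; choosing a symmetric open neighbourhood $V$ of the identity with $V^2\cap K=\{e\}$ (possible because $K$ is finite and multiplication is continuous) makes $\pi$ injective on each coset $gV$. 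Since quotient maps of topological groups are always open, $\pi$ restricts to a homeomorphism of $gV$ onto the open set $\pi(gV)$, so $\pi$ is a local homeomorphism. The second step is $\overline{\omega}_a$, which is a continuous bijective homomorphism from the compact group $\Gamma/K$ onto $H$; a continuous bijection from a compact space to a Hausdorff space is a homeomorphism, so $\overline{\omega}_a$ is one.

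The remaining step is the inclusion $H\hookrightarrow\Gamma$, and this is where (G1) enters: $H=\omega_a(\Gamma)$ is compact, hence closed, and by (G1) it has finite index, so its complement is a finite union of closed cosets and is therefore closed. Hence $H$ is open in $\Gamma$, and the inclusion is an open embedding, in particular a local homeomorphism. A composition of local homeomorphisms is a local homeomorphism, and the claim follows.

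I expect the only genuinely delicate point to be confirming that $\pi$ is locally injective from the finiteness of $K$ — the choice of $V$ relies on continuity of $(x,y)\mapsto xy$ together with the fact that the finite set $K\setminus\{e\}$ can be separated from $e$ by an open neighbourhood. Everything else is standard compact-group bookkeeping: the openness of the finite-index subgroup $H$ (the one place (G1) is used) and the automatic upgrade of the continuous bijection $\overline{\omega}_a$ to a homeomorphism (where compactness is used). It is worth emphasising that both hypotheses are genuinely needed — without (G1) the image $H$ could fail to be open, and without (G2) the fibres of $\omega_a$ could fail to be discrete, either of which would destroy the local-homeomorphism property.
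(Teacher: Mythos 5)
Your proof is correct and uses essentially the same ingredients as the paper's: properness from compactness, the continuous bijection $\Gamma/\ker\omega_a\to\omega_a(\Gamma)$ being a homeomorphism by compact-to-Hausdorff, openness of the finite-index image, and local injectivity of the quotient map from finiteness of the kernel. The only difference is packaging --- you compose three local homeomorphisms, whereas the paper shows $\omega_a$ is open and then injective on a neighbourhood of the identity --- so this is the same argument in substance.
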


\begin{proof}
The homomorphism $\omega_a$ factors through a continuous isomorphism of $\Gamma/\ker \omega_a$ onto $\omega_a(\Gamma)$, which is an open subgroup of $\Gamma$ because it has finite index in $\Gamma$. Since the domain of this isomorphism is compact and the range is Hausdorff, it is a homeomorphism; since the quotient map from $\Gamma$ to $\Gamma/\ker \omega_a$ is open, so is $\omega_a$. The map $\omega_a$ is proper because it is a continuous map from a compact space into a Hausdorff space.

Since $\omega_a$ is open, to see that it is a local homeomorphism it suffices to find an open neighbourhood $U$ of $e$ such that $\omega_a|_U$ is one-to-one. Since $\ker\omega_a$ is finite, we can find an open neighbourhood $U$ of $e$ such that $gU\cap hU=\emptyset$ for $h\not=g$ in $\ker\omega_a$. Suppose $h_1,h_2\in U$ and $\omega_a(h_1)=\omega_a(h_2)$. Then $h_1h_2^{-1}\in\ker\omega_a$. Since $h_1\in U$ and $h_1=h_1h_2^{-1}h_2\in h_1h_2^{-1}U$, we have $U\cap h_1h_2^{-1}U\not=\emptyset$. So $h_1h_2^{-1}=e$, and hence $h_1=h_2$. Thus $\omega_a|_U$ is one-to-one.  
\end{proof}

\begin{proof}[Proof of Proposition~\ref{top inf-graph}] 
Routine calculations show that $(\Gamma,\Lambda_\Gamma,r,s)$ is a category. The map $r$ is trivially continuous, Lemma~\ref{varphi_a maps} implies that $s$ is a local homeomorphism, and $d$ is locally constant, hence continuous. The continuity of multiplication in $\Gamma$ implies that composition is continuous, and since the product of open sets in $\Gamma$ is open, composition in $\Lambda$ is open. To see the factorisation property, suppose that $(a,g)(b,g^a)$ can be rewritten as $(a,k)(b,l)$; then taking the ranges of both sides gives $g=k$, and composability on the right forces $l=k^a=g^a$. So $\Lambda$ is a topological $\nx$-graph. The maps $r|_{\Gamma_a}$ are bijections, so $\Lambda$ is row-finite and has no sources. \end{proof}

We can now apply the construction of \cite[page~305]{y1} to the topological $\nx$-graph $\Lambda$, and thus obtain  a product system $X$ over $\nx$ of Hilbert bimodules over the algebra $C(\Lambda^1)=C(\Gamma)$. Yamashita's topological-graph algebra $\OO(\Lambda)$ is then the Cuntz-Pimsner algebra $\OO(X)$ of this product system. We want to show that our $C(\Gamma)\rtimes_{\alpha,L}\nx$ is naturally isomorphic to $\OO(\Lambda)$, and to do this we need to compare the product systems $M$ and $X$.

When we use the obvious identification of $C(\{a\}\times\Gamma)$ with $C(\Gamma)$ to view $X_a$ as a copy $\{q^X_a(x):x\in C(\Gamma)\}$ of $C(\Gamma)$, the bimodule $X_a$ is very similar to $M_a$: indeed, the underlying spaces are the same, the left and right actions are the same, and the only difference is in the inner products, where the one on $X_a$ is given by
\[
\langle q^X_a(x),q^X_a(y)\rangle(g)=
\begin{cases}
\sum_{h^a=g} \overline{x(g)}y(g) & \text{if $g\in\omega_a(\Gamma)$}\\
0 & \text{otherwise}
\end{cases}
\]
for $x,y\in C(\Gamma_a)=C(\Gamma)$; because of the way the source maps are defined in $\Lambda$, the product system structure is given by $q^X_a(x)q^X_b(y)=q^X_{ab}(x\alpha_a(y))$.

We now define $\psi_a:X_a\to M_a$ by $\psi_a(q^X_a(x))=|\ker\omega_a|^{1/2}q^M_a(x)$. Then for each fixed $a\in \nx$, $\psi_a$ is an isomorphism of Hilbert bimodules. Further, property (G3) implies that
\begin{align*}
\psi_{ab}(q^X_a(x)q^X_b(y))&=\psi_{ab}(q^X_{ab}(x\alpha_a(y)))=|\ker\omega_{ab}|^{1/2}q^M_{ab}(x\alpha_a(y))\\
&=|\ker\omega_{a}|^{1/2}|\ker\omega_{b}|^{1/2}q^M_{a}(x)q^M_b(y)=\psi_a(q^X_a(x))\psi_b(q^X_b(y)).
\end{align*}
Thus the $\{\psi_a:a\in\nx\}$ constitute an isomorphism of product systems of Hilbert bimodules, and we have:

\begin{prop}\label{isoprodsys}
Let $j_X=\{j_{X,a}\}$ and $k_M$ be the canonical Cuntz-Pimsner covariant representations of $X$ and $M$ in $\OO(\Lambda):=\OO(X)$ and $C(\Gamma)\rtimes_{\alpha,L}\nx:=\OO(M)$. Then there is an isomorphism $\Psi$ of $\OO(\Lambda)$ onto $C(\Gamma)\rtimes_{\alpha,L}\nx$ such that $\Psi\circ j_{X,a}=|\ker\omega_a|^{1/2}k_{M,a}$ for every $a\in\nx$.
\end{prop}

We now use this topological-graph realisation to prove a uniqueness theorem for the crossed products $C(\Gamma)\rtimes_{\alpha,L}\nx$. 

\begin{thm}\label{uniqueness thm}
Suppose that $\Gamma$ is a compact abelian group satisfying \textup{(G1--3)}, and that the torsion subgroup $\Tor\Gamma$ has empty interior. If $\pi:C(\Gamma)\rtimes_{\alpha, L}\nx\to B$ is a unital homomorphism into a $C^*$-algebra $B$ and $\pi\circ k_{M,1}$ is injective, then $\pi$ is injective.
\end{thm}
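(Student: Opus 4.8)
The plan is to transport the statement to the topological-graph algebra $\OO(\Lambda)$ via the isomorphism $\Psi$ of Proposition~\ref{isoprodsys}, and then invoke the Cuntz--Krieger uniqueness theorem of Yamashita \cite{y1} for the $C^*$-algebras of topological $\nx$-graphs. That theorem asserts that a unital representation of $\OO(\Lambda)$ is faithful provided it is faithful on the coefficient algebra $C(\Lambda^1)=C(\Gamma)$ and the graph $\Lambda$ satisfies an aperiodicity (topological freeness) condition. So the genuine content of the proof is to show that the hypothesis on $\Tor\Gamma$ is exactly what verifies aperiodicity of $\Lambda$.

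First I would set up the translation. Since $\omega_1=\id$ gives $|\ker\omega_1|=1$, Proposition~\ref{isoprodsys} yields $\Psi\circ j_{X,1}=k_{M,1}$, so $\pi\circ\Psi$ is faithful on $j_{X,1}(C(\Gamma))=C(\Lambda^1)$ precisely when $\pi\circ k_{M,1}$ is injective. Thus it suffices to prove that every unital representation of $\OO(\Lambda)$ that is faithful on $C(\Lambda^1)$ is faithful, and then applying this to $\pi\circ\Psi$ gives injectivity of $\pi\circ\Psi$, hence of $\pi$. Next I would describe the path space. By Proposition~\ref{top inf-graph} each $r|_{\Gamma_a}$ is a bijection and $\Lambda$ is row-finite with no sources, so each vertex $g\in\Gamma$ is the range of exactly one path $(a,g)$ of each degree $a$, with source $\omega_a(g)=g^a$. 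Hence the infinite-path space of $\Lambda$ is naturally identified with $\Gamma$, the degree-$a$ shift being $g\mapsto\omega_a(g)$, and an infinite path is determined by its range vertex. Consequently two shifted paths $\sigma^a x$ and $\sigma^b x$ agree iff their ranges do, i.e.\ iff $\omega_a(g)=\omega_b(g)$.

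The key computation is then that a point $g$ is periodic (there exist $a\neq b$ with $\omega_a(g)=\omega_b(g)$) exactly when $g\in\Tor\Gamma$: indeed $g^a=g^b$ with $a>b$ is equivalent to $g^{a-b}=e$, and conversely $g^n=e$ forces $g^{2n}=g^n$. The same identity, applied after a preliminary shift $g\mapsto g^r$, shows that the eventually-periodic points also coincide with $\Tor\Gamma$, because $\omega_r$ both preserves and reflects torsion. Therefore the set of periodic (boundary) paths of $\Lambda$ is precisely $\Tor\Gamma$, and the assumption that $\Tor\Gamma$ has empty interior is exactly the empty-interior-of-periodic-points condition required for aperiodicity.

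The main obstacle is matching our situation faithfully to the precise formulation of Yamashita's uniqueness theorem: one must confirm that his aperiodicity condition for topological $\infty$-graphs is indeed the empty-interior condition identified above, that the rigidity of $\Lambda$ (a unique path of each degree out of each vertex) genuinely reduces the boundary-path space and its shift maps to $(\Gamma,\{\omega_a\})$, and that his results apply in the rank-$\infty$ case under the identification $\nx\cong\N^\infty$ noted in \S\ref{sec: Exel-Larsen systems}. Once these points are settled, the uniqueness theorem applies to $\pi\circ\Psi$ and completes the argument.
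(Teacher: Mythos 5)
Your proposal is correct and follows essentially the same route as the paper: transport the representation through the isomorphism $\Psi$ of Proposition~\ref{isoprodsys}, observe that the infinite-path space of $\Lambda$ is $\{\mu_g:g\in\Gamma\}$ with each path determined by its range so that translates $\tau^a(\mu_g)$ and $\tau^b(\mu_g)$ coincide exactly when $g^a=g^b$, identify aperiodicity with $\int(\Tor\Gamma)=\emptyset$, and invoke Yamashita's uniqueness theorem. The points you flag as needing verification are precisely what the paper's Lemma~\ref{Cond A for Gamma} establishes.
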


We will deduce Theorem~\ref{uniqueness thm} from Yamashita's uniqueness theorem for topological $k$-graphs. We recall that an infinite path in a topological $\nx$-graph $\Lambda$ with range $v$ is a degree-preserving continuous functor $\mu:\Omega_{\nx}\to\Lambda$ with $\mu(1)=v$. For $m\in \nx$, the translate $\tau^m(\mu)$ is the infinite path such that $\tau^m(\mu)(a,b):=\mu(ma,mb)$. Following Yamashita, we say that $\Lambda$ is \emph{aperiodic} if for each open set $V$ in $\Lambda^1$, there exists an infinite path $\mu\in \Lambda^\infty$ with range in $V$ such that $m\not= n$ implies $\tau^m(\mu)\not=\tau^n(\mu)$. Yamashita's uniqueness theorem is about aperiodic graphs, so the next lemma explains why we need the condition on the torsion subgroup in Theorem~\ref{uniqueness thm}.

\begin{lemma}\label{Cond A for Gamma}
Let $\Gamma$ be a compact abelian group satisfying \textup{(G1--3)}, and let $\Lambda$ be the topological $\nx$-graph in Proposition~\ref{top inf-graph}. 
\begin{enumerate}
\item\label{smallpathsp} For $g\in\Gamma$ we define $\mu_g:\Omega_{\nx}\to\Lambda$ by $\mu_g(a,b)=(a^{-1}b,g^a)$. Then $\mu_g\in\Lambda^\infty$, and  for every $\beta\in\Lambda^\infty$ we have $\beta=\mu_{r(\beta)}$. 
\item\label{equivaper} $\Lambda$ is aperiodic if and only if $\int(\Tor\Gamma)=\emptyset$.
\end{enumerate}
\end{lemma}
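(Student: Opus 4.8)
The plan is to verify part (\ref{smallpathsp}) by direct computation with the composition law of Proposition~\ref{top inf-graph}, and then to read off part (\ref{equivaper}) as a translation of Yamashita's aperiodicity condition into a statement about orders of group elements.

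For part (\ref{smallpathsp}), I would first check that $\mu_g$ is a degree-preserving continuous functor from $\Omega_{\nx}$ to $\Lambda$. On a morphism $(a,b)$ with $a\mid b$ the degree is $d_\Lambda(\mu_g(a,b))=a^{-1}b=d_\Omega(a,b)$; the range and source vertices match because $r_\Lambda(a^{-1}b,g^a)=g^a$ and $s_\Lambda(a^{-1}b,g^a)=(g^a)^{a^{-1}b}=g^b$, which are the images of the vertices $a$ and $b$. Functoriality on a composable pair $(a,b)(b,c)=(a,c)$ follows from the rule $(p,h)(q,h^p)=(pq,h)$, since $\mu_g(a,b)\mu_g(b,c)=(a^{-1}b,g^a)(b^{-1}c,g^b)=(a^{-1}c,g^a)=\mu_g(a,c)$. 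Continuity is automatic because $\Omega_{\nx}$ is discrete, so $\mu_g\in\Lambda^\infty$ with range vertex $\mu_g(1)=g$. For the uniqueness claim, I would fix $\beta\in\Lambda^\infty$ with $r(\beta)=g$ and exploit the factorisation $(1,b)=(1,a)(a,b)$ in $\Omega_{\nx}$ together with the fact from Proposition~\ref{top inf-graph} that each $r|_{\Gamma_a}$ is a bijection: this forces $\beta(1,a)$ to be the unique edge $(a,g)$ of $\Gamma_a$ with range $g$, whence $\beta(a)=g^a$, and then $\beta(a,b)$ must be the unique edge of $\Gamma_{a^{-1}b}$ with range $g^a$, namely $(a^{-1}b,g^a)=\mu_g(a,b)$. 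Thus $\beta=\mu_g=\mu_{r(\beta)}$.

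For part (\ref{equivaper}), the key computation is that translation acts on the parametrisation of (\ref{smallpathsp}) through $\omega_m$: the translate is $\tau^m(\mu_g)(a,b)=\mu_g(ma,mb)=(a^{-1}b,g^{ma})=\mu_{g^m}(a,b)$, so $\tau^m(\mu_g)=\mu_{g^m}$. By the uniqueness in (\ref{smallpathsp}), distinct ranges give distinct paths, so $\tau^m(\mu_g)=\tau^n(\mu_g)$ if and only if $g^m=g^n$. Since the ranges of infinite paths are exactly the points of $\Gamma$, the aperiodicity condition becomes: for every nonempty open $V\subseteq\Gamma$ there is $g\in V$ with $m\mapsto g^m$ injective. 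An element $g$ fails this precisely when $g^{n-m}=e$ for some $m<n$, that is, when $g\in\Tor\Gamma$. Hence aperiodicity is equivalent to the density of $\Gamma\setminus\Tor\Gamma$, which is exactly $\int(\Tor\Gamma)=\emptyset$.

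The computations are routine once the composition law is kept straight, and the only genuinely substantive point is the uniqueness assertion in (\ref{smallpathsp}), which is where the structure of $\Lambda$ (each vertex receiving a unique edge of each degree) is used; everything in (\ref{equivaper}) is then a formal consequence. I would be careful about two small points: reading Yamashita's definition as quantifying over \emph{nonempty} open sets, and pinning down the direction of the equivalence in (\ref{equivaper})---for the forward implication I would simply test aperiodicity against $V=\int(\Tor\Gamma)$.
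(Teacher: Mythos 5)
Your proposal is correct and follows essentially the same route as the paper: verify functoriality of $\mu_g$ by the composition rule, use uniqueness of factorisations (equivalently, bijectivity of $r|_{\Gamma_a}$) to show every infinite path is some $\mu_g$, and then translate Yamashita's aperiodicity condition via $\tau^m(\mu_g)=\mu_{g^m}$ into density of the elements of infinite order. The only cosmetic difference is that the paper phrases the uniqueness step through the factorisation property of $\beta(1,b)=(b,g)$ rather than through $r|_{\Gamma_a}$ being a bijection, which amounts to the same thing.
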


\begin{proof}
\eqref{smallpathsp} Fix $g\in\Gamma$. Then for composable morphisms $(a,b)$, $(b,c)$ in $\Omega_{\nx}$ we have
\begin{align*}
\mu_g(a,b)\mu_g(b,c)&=(a^{-1}b,g^a)(b^{-1}c,g^b)=(a^{-1}b,g^a)(b^{-1}c,{(g^a)}^{a^{-1}b})\\
&=(a^{-1}c,g^a)=\mu_g(a,c)=\mu_g((a,b)(b,c)),
\end{align*}
so $\mu_g$ is a functor, and $\mu$ is trivially degree-preserving. So $\mu_g$ is an infinite path in $\Lambda$. To see that they all have this form, let $\beta\in \Lambda^{\infty}$ and take $g=r(\beta)$. Then for $(a,b)\in\Omega_{\nx}$, we have $\beta(1,b)=\beta((1,a)(a,b))=\beta(1,a)\beta(a,b)$. Since $r(\beta)=g$, we have $\beta(1,b)=(b,g)$, and hence the factorisation property implies that $\beta(1,a)=(a,g)$ and $\beta(a,b)=(a^{-1}b,g^a)=\mu_g(a,b)$. Hence $\beta=\mu_g$.

\eqref{equivaper} Since each path in $\Lambda^\infty=\{\mu_g\}$ is determined by its range, we have 
\[
\tau^a(\mu_g)\not=\tau^b(\mu_g)\Longleftrightarrow r(\tau^a(\mu_g))\not=r(\tau^b(\mu_g))\Longleftrightarrow g^a\not= g^b.
\]
Thus $\Lambda$ is aperiodic if and only if each open set $V$ in $\Lambda^1=\Gamma$ contains some $g=r(\mu_g)$ with infinite order. But this is equivalent to saying that the torsion subgroup has empty interior.
\end{proof}

\begin{proof}[Proof of Theorem~\ref{uniqueness thm}]
The homomorphism $\pi$ is the integrated form $\Pi\rho$ of a Cuntz-Pimsner covariant representation $\{\rho_{M,a}:a\in \nx\}$ in $B$, and then $\rho_{M,1}=\pi\circ k_{M,1}$ is injective. Composing with the isomorphism $\Psi$ of Proposition~\ref{isoprodsys} gives a Cuntz-Pimsner covariant representation $\{\rho_{M,a}\circ \Psi\}$ of $X$ with $(\rho\circ\Psi)_1=\rho_{M,1}$ injective. Since $\Tor\Gamma$ has empty interior, Lemma~\ref{Cond A for Gamma} says that $\Lambda$ is aperiodic.  So Yamashita's uniqueness theorem \cite[Theorem~3.11]{y1} implies that $\intfrm(\rho\circ\Psi)$ is injective. Hence $\intfrm\rho=\intfrm(\rho\circ\Psi)\circ\Psi^{-1}$ is injective too.
\end{proof}

\begin{lemma}\label{minimal}
Let $\Gamma$ be a compact abelian group satisfying \textup{(G1--3)}. Then the topological graph $\Lambda$ of Proposition~\ref{top inf-graph} is minimal in the sense of \cite[4.1]{y1} if and only if 
\begin{equation}\label{mincond}
\overline{\{h\in \Gamma: h^a=g^b\text{ for some }a,b\in\nx\}}=\Gamma\text{ for every $g\in\Gamma$.}
\end{equation}   
\end{lemma}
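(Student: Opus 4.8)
The plan is to unwind Yamashita's notion of minimality \cite[4.1]{y1} into the statement that every orbit of the reachability relation on the vertex space $\Lambda^1=\Gamma$ is dense, and then to identify these orbits with the sets appearing in \eqref{mincond}. Since $\Lambda$ is row-finite with no sources (Proposition~\ref{top inf-graph}), a closed set $V\subseteq\Gamma$ is invariant precisely when it is stable under following and co-following edges: $g\in V$ forces the source $g^a\in V$ for every edge $(a,g)$, and $g\in V$ forces the range $h\in V$ for every edge $(a,h)$ with source $h^a=g$. Thus $V$ is invariant exactly when it is a union of orbits, where the orbit $\Orb(g)$ is the smallest subset containing $g$ that is stable under the relation $x\sim x^a$ (for all $a\in\nx$) together with its inverse.

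First I would compute the orbits explicitly. Let $R=\{(g,h):h^a=g^b\text{ for some }a,b\in\nx\}$. A short calculation shows $R$ is an equivalence relation: reflexivity and symmetry are immediate, and transitivity follows by raising $h^a=g^b$ and $g^c=k^d$ to the powers $c$ and $b$ respectively to get $h^{ac}=g^{bc}=k^{bd}$. Since $R$ contains each generating pair $(g,g^a)$, and conversely any $(g,h)\in R$ with $h^a=g^b=:k$ is witnessed by the chain $g\sim g^b=h^a\sim h$, the relation $R$ coincides with the orbit equivalence. Hence $\Orb(g)=\{h\in\Gamma:h^a=g^b\text{ for some }a,b\in\nx\}$, and \eqref{mincond} is exactly the assertion that $\overline{\Orb(g)}=\Gamma$ for every $g$.

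Next I would show that $\overline{\Orb(g)}$ is the smallest closed invariant set containing $g$, which reduces the lemma to the standard equivalence between minimality and density of every orbit. The only nonroutine point here, and the step I expect to be the main obstacle, is that the closure of an invariant set is again invariant. Forward stability of $\overline V$ is immediate from the continuity of $\omega_a$. Backward stability is where the real work lies: given $g\in\overline V$ and $h$ with $h^a=g$, I would invoke Lemma~\ref{varphi_a maps} to choose a neighbourhood $U$ of $h$ on which $\omega_a$ restricts to a homeomorphism onto a neighbourhood of $g$, approximate $g$ by a net in $V$, and pull it back through this local inverse to obtain a net in $U$ converging to $h$; this net lies in $V$ by backward invariance, so $h\in\overline V$. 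This is the step that genuinely uses that each $\omega_a$ is a local homeomorphism rather than merely continuous.

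With this in hand both implications fall out. If \eqref{mincond} holds and $V$ is a nonempty closed invariant set, then $V$ contains some $g$, whence $V\supseteq\overline{\Orb(g)}=\Gamma$, so $\Lambda$ is minimal. Conversely, if $\Lambda$ is minimal then for each $g$ the nonempty closed invariant set $\overline{\Orb(g)}$ must equal $\Gamma$, which is precisely \eqref{mincond}. Thus the substance of the argument is the definitional bookkeeping --- matching Yamashita's invariance condition with ``union of orbits'' and verifying that closures of invariant sets stay invariant --- after which the algebraic identification of $\Orb(g)$ and the density argument are both short.
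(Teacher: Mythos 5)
Your argument is essentially correct, but it takes a genuinely different route from the paper. The paper's proof is three lines: it invokes Yamashita's Theorem~4.7, which characterises minimality as density of the sets $\Orb(g,\mu)=\bigcup_{b}\bigcup_{a}r|_{\Gamma_a}(s|_{\Gamma_a}^{-1}(\mu(b)))$ for all infinite paths $\mu$ with range $g$, then uses Lemma~\ref{Cond A for Gamma}\,(a) to see that the only such path is $\mu_g$, whose $b$th vertex is $g^b$, so that $\Orb(g,\mu_g)$ is exactly the set in \eqref{mincond}. You instead work straight from the definition of minimality as the absence of nontrivial closed invariant subsets of $\Lambda^1$, compute the full reachability equivalence (your transitivity calculation $h^{ac}=g^{bc}=k^{bd}$ is the algebraic heart, and it also explains why no iterated zigzags are needed beyond ``one step forward, one step back''), and then re-prove the bridge between invariant sets and orbit closures, using Lemma~\ref{varphi_a maps} to show that closures of invariant sets remain backward-invariant. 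In effect you are re-deriving the relevant special case of Yamashita's Theorem~4.7 rather than citing it. What this buys you is independence from that theorem and from Lemma~\ref{Cond A for Gamma}\,(a), which you never need; what it costs is that your proof silently pins down what ``invariant'' means in \cite[4.1]{y1}. That identification is correct here, but only because of special features of $\Lambda$: each $r|_{\Gamma_a}$ is a bijection, so the saturation-type half of invariance (which in general only requires $h\in V$ when \emph{all} edges of a given degree received by $h$ have source in $V$, and only at regular vertices) collapses to your pointwise condition ``$h^a\in V\Rightarrow h\in V$'', and every vertex is regular since $\Lambda$ is row-finite with no sources. If you keep your route, you should make that reduction explicit rather than asserting the form of the invariance condition; otherwise the shortest repair is simply to quote \cite[Theorem~4.7]{y1} as the paper does.
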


\begin{proof}
The equivalence of (ii) and (ii) in \cite[Theorem~4.7]{y1} says that the graph $\Lambda$ is minimal if and only if 
\begin{equation}\label{orbit eq}
\Orb(g,\mu):=\bigcup_{b\in\nx}\bigcup_{a\in\nx}r|_{\Gamma_a}(s|_{\Gamma_a}^{-1}(\mu(b)))
\end{equation}
is dense in $\Lambda^1=\Gamma$ for every $g\in \Gamma$ and every $\mu\in \Lambda^\infty$ with $r(\mu)=g$. Lemma~\ref{Cond A for Gamma}\,\eqref{smallpathsp} implies that for each $g$ there is exactly one such path $\mu$, namely the path $\mu_g$. But for this path, the $b$th vertex $\mu_g(b)$ is $g^b$, and $\Orb(g,\mu_g)$ is the set whose closure appears in~\eqref{mincond}.
\end{proof}

If $G$ also satisfies $\int(\Tor\Gamma)=\emptyset$, then $\Lambda$ is aperiodic, and it follows from \cite[Theorem~4.7]{y1} that \eqref{mincond} is equivalent to simplicity of $C^*(\Lambda)=C(\Gamma)\rtimes_{\alpha,L}\nx$. For connected groups, \eqref{mincond} always holds, and we get the following.

\begin{thm}\label{simple and p.i. thm}
Suppose that $\Gamma$ is a connected compact abelian group satisfying \textup{(G2)} and $\int(\Tor\Gamma)=\emptyset$. Then $\Gamma$ also satisfies \textup{(G1)} and \textup{(G3)}, and $C(\Gamma)\rtimes_{\alpha,L}\nx$ is purely infinite and simple.   
\end{thm}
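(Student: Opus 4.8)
The plan is to dispatch the two algebraic conditions (G1) and (G3) by Pontryagin duality, and then to obtain pure infiniteness and simplicity by feeding aperiodicity and minimality of the topological $\nx$-graph $\Lambda$ into Yamashita's theorem.

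First I would use that a connected compact abelian group is divisible. Dually, $\widehat\Gamma$ is torsion-free, and the annihilator of $\omega_a(\Gamma)$ in $\widehat\Gamma$ is $\{\chi\in\widehat\Gamma:\chi(g^a)=1\text{ for all }g\}=\{\chi:\chi^a=1\}=\{1\}$; hence $\omega_a(\Gamma)$ is dense and, being compact, all of $\Gamma$. So $\omega_a$ is onto and (G1) holds with index $1$. For (G3) I would exploit $\omega_{ab}=\omega_a\circ\omega_b$: this gives $\ker\omega_b\subseteq\ker\omega_{ab}$ and $\omega_b(\ker\omega_{ab})\subseteq\ker\omega_a$, while surjectivity of $\omega_b$ lets me lift any $k\in\ker\omega_a$ to some $g$ with $g^b=k$, which automatically lies in $\ker\omega_{ab}$ since $g^{ab}=k^a=1$. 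Thus there is a short exact sequence $1\to\ker\omega_b\to\ker\omega_{ab}\labelarrow{\omega_b}\ker\omega_a\to1$ of finite groups (finite by (G2)), and comparing orders yields (G3).

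With (G1--3) established, $(C(\Gamma),\nx,\alpha,L)$ is an Exel-Larsen system and, by Proposition~\ref{isoprodsys}, $C(\Gamma)\rtimes_{\alpha,L}\nx\cong\OO(\Lambda)$ for the topological $\nx$-graph $\Lambda$ of Proposition~\ref{top inf-graph}. Because $\int(\Tor\Gamma)=\emptyset$, Lemma~\ref{Cond A for Gamma} tells me $\Lambda$ is aperiodic, and by Lemma~\ref{minimal} minimality of $\Lambda$ is equivalent to the density condition \eqref{mincond}. Granting \eqref{mincond}, I would invoke Yamashita's theorem \cite[Theorem~4.7]{y1} for the aperiodic, minimal, row-finite graph $\Lambda$ with no sources to conclude that $C^*(\Lambda)$ is simple. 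For pure infiniteness I would note that a nonzero torsion-free $\widehat\Gamma$ with no nonzero divisible subgroup has $\widehat\Gamma/a\widehat\Gamma\neq0$ for some $a$, so $|\ker\omega_a|\geq2$; the graph then genuinely expands, and this expansiveness is what yields the pure-infiniteness conclusion of \cite[Theorem~4.7]{y1} rather than a stably finite one.

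The hard part will be \eqref{mincond}: for every $g\in\Gamma$ the orbit $\Orb(g,\mu_g)=\{h\in\Gamma:h^a=g^b\text{ for some }a,b\in\nx\}$ must be dense. Taking $a=b$ gives $\Orb(g,\mu_g)\supseteq g\cdot\Tor\Gamma$, so it suffices to prove $\overline{\Tor\Gamma}=\Gamma$, which dually says that $\widehat\Gamma$ has no nonzero divisible subgroup. The subtlety here is that the hypothesis $\int(\Tor\Gamma)=\emptyset$ used for aperiodicity is weaker than the density $\overline{\Tor\Gamma}=\Gamma$ needed for minimality, so connectedness must be combined with genuine structural input: for higher tori the density is immediate, whereas for solenoids it is precisely what the covering results of \cite{g} are designed to supply. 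I expect this density statement to be the principal obstacle; once it is secured, the remainder is bookkeeping together with Lemmas~\ref{Cond A for Gamma} and \ref{minimal} and \cite[Theorem~4.7]{y1}.
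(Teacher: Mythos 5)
Your handling of (G1), (G3) and aperiodicity is fine and matches the paper's (the paper deduces surjectivity of $\omega_a$ from divisibility of connected compact groups rather than by duality, and gets (G3) from the same exact sequence of kernels), but the rest has two genuine gaps.

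The main one is minimality. By keeping only the terms with $a=b$ in the orbit you reduce \eqref{mincond} to the density of $\Tor\Gamma$ in $\Gamma$, and then you leave that density unproved, calling it the principal obstacle. That is the heart of the theorem, so the proof is incomplete; worse, the reduction throws away too much. The orbit $\{h:h^a=g^b\}$ already contains the set of roots $\{h:h^a=g\}$ (take $b=1$), and it is the density of \emph{this} set that the paper proves, using the characterisation of connectedness by $\Gamma=\bigcup_n U^n$: compactness and the nesting of the sets $U^n$ give $\Gamma=U^a$ for a single $a$, which produces an $a$-th root of $gh^{-a}$ inside $U$ and hence an $a$-th root of $g$ in any prescribed translate $hU$. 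Density of $\Tor\Gamma$ is strictly stronger and does \emph{not} follow from the hypotheses: dually it says $\bigcap_a a\widehat\Gamma=0$, which is exactly the ``no nonzero divisible subgroup'' clause you quietly insert later but which is not among (G2) and $\int(\Tor\Gamma)=\emptyset$. Concretely, $\Gamma=\widehat\Q$ (the solenoid $\Sigma_P$ in which every prime occurs infinitely often) is connected, has every $\ker\omega_a$ trivial, so satisfies (G2) with $\int(\Tor\Gamma)=\int(\{e\})=\emptyset$, yet $\Tor\Gamma=\{e\}$ is nowhere near dense; so the $a=b$ reduction cannot be repaired within the stated hypotheses, and one must use the full orbit as the paper does.

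The second gap concerns pure infiniteness. \cite[Theorem~4.7]{y1} is the simplicity criterion (aperiodic plus minimal); it does not yield pure infiniteness, and ``$|\ker\omega_a|\geq 2$ for some $a$'' is neither a consequence of the hypotheses (again $\widehat\Q$) nor the condition Yamashita requires. The paper instead applies \cite[Theorem~4.13]{y1}, whose additional hypothesis is that $\Lambda$ be \emph{contracting} in the sense of \cite[Definition~4.12]{y1}, and verifies it by the same device used for minimality: $\Gamma=U^a$ for every open neighbourhood $U$ of $e$ exhibits $U$ as a contracting open set. Your argument needs this verification, not an appeal to the kernels being nontrivial.
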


The following proof uses the following characterisations of connectedness for a compact abelian group $\Gamma$ (the second holds also for locally compact $\Gamma$):
\begin{itemize}
\item[(i)]\cite[Theorem~24.25]{hr}: $\Gamma$ is connected if and only if $\Gamma$ is divisible, in the sense that for every $g\in\Gamma$ and $a\in\nx$, there exists $h\in\Gamma$ with $h^a=g$,
\item[(ii)]\cite[Corollary~7.9]{hr}: $\Gamma$ is connected if and only if for every neighbourhood $U$ of the identity in $\Gamma$, we have $\bigcup_{n=1}^{\infty}U^n=\Gamma$.
\end{itemize}

\begin{proof}[Proof of Theorem~\ref{simple and p.i. thm}]
We again take $\Lambda$ as in Proposition~\ref{top inf-graph}. From (i) above we know that $\Gamma$ is divisible, which means each $\omega_a$ is surjective, and we have (G1). As pointed out in \cite[Section~5.1]{l}, if $\omega_a$ is surjective, then for each $b\in\nx$ the map $g\mapsto\omega_a(g)$ is a homomorphism of $\ker\omega_{ab}$ onto $\ker\omega_b$ with kernel equal to $\ker\omega_a$, and this implies (G3). 

Since $\OO(\Lambda)\cong C(\Gamma)\rtimes_{\alpha,L}\nx$, it suffices to show that $\Lambda$ satisfies the conditions of \cite[Theorem~4.13]{y1}. Since $\int(\Tor\Gamma)=\emptyset$, Lemma~\ref{Cond A for Gamma} says that $\Lambda_\Gamma$ is aperiodic. Thus we need to show that $\Lambda_\Gamma$ is minimal and contracting, in the sense of \cite[Theorem~4.7(iii)]{y1} and \cite[Definition~4.12]{y1}, respectively.

We let $g\in G$, and claim that $\{h\in \Gamma:h^a=g\text{ for some }a\in \nx\}$ is dense in $\Gamma$. Suppose $h\in\Gamma$ and $U$ is an open neighbourhood of the identity $e\in\Gamma$; we need to find $k\in hU$ and $a\in\nx$ such that $k^a=g$. We know from (ii) that $\Gamma=\bigcup_{n=1}^{\infty}U^n$. Since $\Gamma$ is compact, there exists a finite subset $F$ of $\nx$ with $\Gamma=\bigcup_{a\in F}U^a$. For $b,c\in\nx$ with $b|c$ the map $u\mapsto ue^{b^{-1}c}$ is an injection of $U^b$ in $U^c$. Thus the $\{U^b\}$ are nested, and there exists $a\in\nx$ with $\Gamma=U^a$. Now choose $h_0\in U$ such that $h_0^a=gh^{-a}$, and then $k:=hh_0$ satisfies $k\in hU$ and $k^a=h^ah_0^a=h^agh^{-a}=g$, proving the claim. Now Lemma~\ref{minimal} implies that $\Lambda$ is minimal

To see that $\Lambda $ is contracting, consider any open neighbourhood $U$ of $e$. Then as in the previous paragraph there exists $a\in\nx$ with $\Gamma=U^a$. This implies that $U$ is a contracting open set, as defined in \cite[4.11]{y1}, and hence that $\Lambda$ is contracting at $e$, as in \cite[4.12]{y1}. The result now follows from \cite[Theorem~4.13]{y1}.
\end{proof}

\section{Examples}\label{sec: examples}

We now apply the general results of the previous section to some specific compact abelian groups $\Gamma$.

\subsection{Tori} For each positive integer $l$ the torus $\T^l$ is a connected compact abelian group. Each $\omega_a$ satisfies $|\ker\omega_a|=a^l<\infty$, so (G2) holds. We now claim that $\int(\Tor\T^l)=\emptyset$, or equivalently  that the elements of infinite order are dense in $\T^l$. To see this, let $U$ be an open set in $\T^l$, and choose $w\in U$ with first coordinate of the form $e^{i\theta}$ for some rational $\theta$; then the exponent in the first coordinate of $w^a$ is never an integer multiple of $2\pi i$, and $w$ has infinite order. So Theorem~\ref{simple and p.i. thm} gives the following result.

\begin{prop}\label{the l-torus}
For each positive integer $l$, the crossed product $C(\T^l)\rtimes_{\alpha,L}\nx$ is purely infinite and simple.
\end{prop}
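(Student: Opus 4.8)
The plan is to invoke Theorem~\ref{simple and p.i. thm}, so it suffices to check that $\T^l$ is a connected compact abelian group, that it satisfies \textup{(G2)}, and that $\int(\Tor\T^l)=\emptyset$; the theorem then supplies \textup{(G1)} and \textup{(G3)} for free and delivers pure infiniteness and simplicity at once. Connectedness and compactness of $\T^l$ are standard, so the real work is in the other two conditions.

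For \textup{(G2)} I would compute $\ker\omega_a$ directly. Since $\omega_a(g)=g^a$, its kernel is the group of $a$-torsion points of $\T^l$, which is isomorphic to $(\Z/a\Z)^l$ and so has exactly $a^l$ elements; in particular each $\ker\omega_a$ is finite. This is a one-line verification, and as a bonus it records the explicit value $|\ker\omega_a|=a^l$.

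The only point requiring genuine care is the density of infinite-order elements, i.e.\ $\int(\Tor\T^l)=\emptyset$. Writing points of $\T$ as $e^{i\theta}$, one has $\omega_a(e^{i\theta})=e^{ia\theta}$, so $e^{i\theta}$ is torsion precisely when $\theta\in 2\pi\Q$, and of infinite order otherwise. The key observation is that a nonzero rational $\theta$ gives an infinite-order element: if $\theta\in 2\pi\Q$ then $\theta/(2\pi)$ would be rational, contradicting the irrationality of $\pi$. Since such $\theta$ are dense in $\R$, I would argue that, given any nonempty open $U\subseteq\T^l$, I can perturb the first coordinate of an arbitrary point of $U$ to the form $e^{i\theta}$ with $\theta$ a small nonzero rational while staying inside $U$; the resulting point then has infinite order. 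Hence the infinite-order elements are dense and $\Tor\T^l$ has empty interior.

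With all three hypotheses in hand, Theorem~\ref{simple and p.i. thm} immediately gives that $C(\T^l)\rtimes_{\alpha,L}\nx$ is purely infinite and simple. I expect the density argument to be the only substantive step, and its one subtlety is conceptual rather than technical: in the $e^{i\theta}$ parametrisation it is the \emph{rational} angles, not the irrational ones, that yield infinite-order elements, so one must be careful not to invoke the wrong criterion.
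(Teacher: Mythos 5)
Your proposal is correct and follows essentially the same route as the paper: verify that $\T^l$ is connected and compact, that $|\ker\omega_a|=a^l<\infty$ gives (G2), and that points whose first coordinate is $e^{i\theta}$ with $\theta$ a nonzero rational have infinite order (by irrationality of $\pi$) and are dense, so $\int(\Tor\T^l)=\emptyset$; then Theorem~\ref{simple and p.i. thm} applies. The only nit is wording: you want $\theta$ to be a nonzero rational \emph{close to the original angle}, not necessarily small, but this does not affect the argument.
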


\begin{remark}\label{rem about l=1}
This result is known for $l=1$, because $C(\T)\rtimes_{\alpha,L}\nx$ is isomorphic to Cuntz's algebra $\QQ_\N$ (by \cite[Theorem~5.2]{bahlr}), and Cuntz proved in \cite{c2} that $\QQ_\N$ is purely infinite simple.
\end{remark}


\subsection{Solenoids}

Let $P=(p_1,p_2,\dots)$ be a sequence of primes. The \emph{$P$-adic solenoid} is the inverse limit $\Sigma_P:=\varprojlim(\T,z\mapsto z^{p_i})$, which is a compact abelian group. We will use results of Charatonik-Covarrubias \cite{cc} and Gumerov \cite{g} to study $C(\Sigma_P)\rtimes_{\alpha,L}\nx$.

The solenoid $\Sigma_P$ is  connected because it is the inverse limit of connected spaces \cite[Theorem~6.1.20]{eng}. The maps $\omega_a:\Sigma_P\to\Sigma_P$ are studied in \cite{g} (there  denoted $h_P^a$). Proposition~3 of \cite{g} says that $\omega_a$ is a homeomorphism if each prime divisor of $a$ occurs infinitely often in $P$, and Theorem~1 of \cite{g} says that $|\ker\omega_a|=a$ if and only if no prime divisor of $a$ occurs infinitely often in $P$. Let $P_1$ be the set of primes which occur infinitely often in $P$. Then each $a$ has a unique factorisation $a=bc$ where $b$ is coprime to every $p\in P_1$, and $c$ has all its prime factors in $P_1$, and we have $|\ker\omega_a|=|\ker(\omega_b\circ\omega_c)|=|\ker\omega_b|=b$. Hence  $\Sigma_P$ satisfies (G2).  

We claim that the elements of infinite order are dense in $\Sigma_P$. To see this we consider the canonical continuous homomorphism $\pi_n$ of the inverse limit $\varprojlim(\T,z\mapsto z^{p_i})$ onto the $n$th copy of $\T$. Then the sets $\{\pi_n^{-1}(U):n\geq 1,\text{ $U$ is open in $\T$}\}$ form a basis for the topology on $\Sigma_P$. If $g\in\Sigma_P$ has some $\pi_n(g)=e^{i\theta}$ with $\theta$ rational, then $g$ has infinite order, and since the $\pi_n$ are surjective, there are such points in every $\pi_n^{-1}(U)$. So the elements of infinite order are dense, as claimed, and $\int(\Tor\Sigma_P)=\emptyset$.

Thus Theorem~\ref{simple and p.i. thm} gives:

\begin{prop}\label{the solenoid}
The crossed product $C(\Sigma_P)\rtimes_{\alpha,L}\nx$ is purely infinite and simple.
\end{prop}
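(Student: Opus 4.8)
The plan is to obtain Proposition~\ref{the solenoid} directly from Theorem~\ref{simple and p.i. thm}, so the entire argument reduces to checking that the $P$-adic solenoid $\Sigma_P$ is a connected compact abelian group satisfying \textup{(G2)} and $\int(\Tor\Sigma_P)=\emptyset$. Connectedness is the easy input: $\Sigma_P$ is by definition the inverse limit of the system $(\T,z\mapsto z^{p_i})$, and an inverse limit of connected compact spaces is connected, so this hypothesis is automatic.

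The heart of the verification is \textup{(G2)}, the finiteness of each $\ker\omega_a$, and this is where I expect the real work to sit, because it depends on the fine structure of the endomorphisms $\omega_a$ of a solenoid rather than on any soft general fact. I would use the two key features of these maps: $\omega_a$ is a homeomorphism exactly when every prime dividing $a$ occurs infinitely often in the defining sequence $P$, whereas $|\ker\omega_a|=a$ exactly when no prime dividing $a$ occurs infinitely often. Letting $P_1$ be the primes that occur infinitely often, I would split an arbitrary $a$ uniquely as $a=bc$ with $b$ coprime to every prime in $P_1$ and $c$ supported on $P_1$. Since $\omega_a=\omega_b\circ\omega_c$ and $\omega_c$ is a homeomorphism, $\omega_c$ restricts to a bijection of $\ker\omega_a=\ker(\omega_b\circ\omega_c)=\omega_c^{-1}(\ker\omega_b)$ onto $\ker\omega_b$, whence $|\ker\omega_a|=|\ker\omega_b|=b<\infty$, giving \textup{(G2)}.

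It remains to show $\int(\Tor\Sigma_P)=\emptyset$, for which I would prove that the infinite-order elements are dense. The preimages $\pi_n^{-1}(U)$ of open sets $U\subseteq\T$ under the canonical projections $\pi_n:\Sigma_P\to\T$ form a basis for the topology, so it is enough to place an infinite-order element in each such basic open set. Choosing $g$ with $\pi_n(g)=e^{i\theta}$ for a nonzero rational $\theta$ makes $\pi_n(g)$ a non-root-of-unity, hence of infinite order in $\T$, which forces $g$ to have infinite order in $\Sigma_P$; surjectivity of the $\pi_n$ then supplies such a $g$ inside every $\pi_n^{-1}(U)$. With connectedness, \textup{(G2)}, and the emptiness of $\int(\Tor\Sigma_P)$ all in hand, Theorem~\ref{simple and p.i. thm} applies verbatim and yields that $C(\Sigma_P)\rtimes_{\alpha,L}\nx$ is purely infinite and simple.
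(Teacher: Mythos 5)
Your proposal is correct and follows essentially the same route as the paper: connectedness via the inverse limit of connected spaces, (G2) via Gumerov's results on $\omega_a$ together with the factorisation $a=bc$ relative to the primes occurring infinitely often in $P$, density of infinite-order elements via the basic open sets $\pi_n^{-1}(U)$, and then an appeal to Theorem~\ref{simple and p.i. thm}. The only (welcome) refinement is that you spell out why $|\ker\omega_a|=|\ker\omega_b|$ by noting that the homeomorphism $\omega_c$ restricts to a bijection of $\ker\omega_a$ onto $\ker\omega_b$, and that you correctly insist on $\theta$ being a \emph{nonzero} rational.
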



\subsection{The $p$-adic integers}

For each prime $p$, the $p$-adic integers are the elements of the inverse limit $\Z_p:=\varprojlim\Z/p^k\Z$, where each map $\Z/p^{k+1}\Z\to\Z/p^k\Z$ is reduction modulo $p^k$. Each $\Z/p^k\Z$ is a finite ring, and hence the inverse limit is a compact ring; we are interested in the additive group $(\Z_p,+)$, which is a compact abelian group but is not connected. For $a\in\nx$, the map $\omega_a$ takes $x$ to $ax$, and is injective. We can write $a=p^nb$, where $b$ is coprime to $p$, and then $|\Z_p:\omega_a(\Z_p)|=p^n$. So $\Z_p$ satisfies (G1--3). 

The transfer operator $L_a$ on $C(\Z_p)$ is given by 
\[
L_a(f)(x)=
\begin{cases}
f(a^{-1}x) & \text{if $x\in a\Z_p=\range\omega_a$,}\\
0 & \text{otherwise}
\end{cases}
\]
and $a\mapsto L_a$ is an action by endomorphisms of $C(\Z_p)$ such that $\alpha_a\circ L_a=\id$. So we are in the situation of \cite[Theorem~5.7]{l}, and \cite[Corollary~5.8]{l} implies that $C(\Z_p)\rtimes_{\alpha,L}\nx$ is isomorphic to the Stacey crossed product $C(\Z_p)\times_{L}^{\St}\nx$ (as in \cite{s,lr2,aHR}, for example).

We can say more about this Stacey crossed product using the analysis of \cite{lpr,blpr}. It follows from \cite[Theorem~4.3]{l2} that the ideal $C_0(\Z_p\backslash\{0\})$ in $C(\Z_p)$ is extendibly invariant in the sense of Adji \cite{a}, and hence by \cite[Theorem~1.7]{l2} gives an ideal $C_0(\Z_p\backslash\{0\})\times^{\St}_L\nx$ in $C(\Z_p)\times_{L}^{\St}\nx$ with quotient $\C\times_{L}^{\St}\nx$ isomorphic to $C^*(\nx)=C(\T^\infty)$.

To analyse the structure of the ideal, set $P:=\{b\in \nx:b\text{ is coprime to }p\}$, and note that $(n,b)\mapsto p^nb$ is an isomorphism of $\N\times P$ onto $\nx$. Similarly, with $\UU(\Z_p)$ the group of units in the ring $\Z_p$, the map $(n,x)\mapsto p^nx$ is a homeomorphism of $\N\times \UU(\Z_p)$ onto $\Z_p\backslash\{0\}$. This homeomorphism gives a tensor-product decomposition $C_0(\Z_p\backslash\{0\})=c_0(\N)\otimes C(\UU(\Z_p))$, and in this decomposition the endomorphism $\alpha_{p^na}$ decomposes as $\tau_n\otimes\sigma_a$, where 
\[
\tau_n(f)(m)=\begin{cases}
f(m-n)&\text{if $m\geq n$}\\
0&\text{if $m<n$,}
\end{cases}
\]
and $\sigma_a(g)(x)=g(ax)$ (see \cite[page~175]{lpr}). Now Theorem~2.6 of \cite{l2} implies that 
\[
\big(c_0(\N)\otimes C(\UU(\Z_p))\big)\times^{\St}_\alpha\nx\cong \big(c_0(\N)\times^{\St}_\tau\N\big)\otimes\big(C(\UU(\Z_p))\times^{\St}_\sigma P\big).
\]
As in the proof of \cite[Lemma~2.5]{lpr}, $c_0(\N)\times^{\St}_\tau\N$ is isomorphic to $\KK(\ell^2(\N))$. The endomorphisms $\sigma_a$ are in fact automorphisms, and hence $\sigma$ extends to an action $\overline{\sigma}$ of the subgroup $PP^{-1}$ of $\Q^*$. Theorem~4.1 of \cite{blpr} implies that 
\[
C(\UU(\Z_p))\times^{\St}_\sigma P=C(\UU(\Z_p))\times_{\overline{\sigma}} (PP^{-1})
\]
is a simple AT-algebra with real rank zero and a unique tracial state.


\subsection{The integral ad\`eles} The integral ad\`eles are the elements of the ring $\ZZ:=\prod_{p\textup{ prime}}\Z_p$, and the additive group $(\ZZ,+)$ satisfies (G1--3). As in the preceding example,  \cite[Corollary~5.8]{l} gives isomorphisms
\begin{equation}\label{BCisos}
C(\ZZ)\rtimes_{\alpha,L}\nx\cong C(\ZZ)\rtimes_{L,\alpha}\nx\cong C(\ZZ)\rtimes_L^{\St}\nx,
\end{equation}
and since $\ZZ$ is the Pontryagin dual of $\Q/\Z$, \cite[Corollary~2.10]{lr2} implies that the Stacey crossed product is isomorphic to the Bost-Connes algebra (as observed in \cite[Example~5.9]{l}). The statements of \cite[Lemma~4.5 and Theorem~5.7]{l} show that the isomorphisms in \eqref{BCisos} preserve the copies of $C(\ZZ)$, so Theorem~\ref{uniqueness thm} gives \cite[Theorem~3.7]{lr2}.

In view of the isomorphism \eqref{BCisos}, the results of \cite{lr3} show that the Exel-Larsen crossed product $C(\ZZ)\rtimes_{\alpha,L}\nx$ has a complicated ideal structure, and is in  particular not simple.

\begin{remark}
Of course this proof of \cite[Theorem~3.7]{lr2} goes through Yamashita's uniqueness theorem for row-finite graphs with no sources from \cite{y1}. So it is mildly curious that he was not able to do this in \cite{y1}, because the topological-graph model he was using has sources (see \cite[Remark~5.6]{y1}). He has since proved a more general uniqueness theorem which does apply to this example (see \cite{y2}, especially Example~5.17).
\end{remark}


\section{Exel-Larsen systems from higher-rank graphs}

Let $E$ be a finite directed graph with one vertex, and partition the edge set $E^1=E^1_B\sqcup E^1_R$ into blue and red edges. We suppose that there are $N_1$ blue edges and $N_2$ red edges, and we use multiindex notation: for $n=(n_1,n_2)\in\N^2$ we write $N^n:=N_1^{n_1}N_2^{n_2}$. We write $E_{BB},E_{BR},E_{RB}$ and $E_{RR}$ for the sets of blue-blue, blue-red, red-blue and red-red paths of length $2$, respectively. We suppose that we have a bijection $\theta:E_{BR}^2\to E_{RB}^2$, and we write\footnote{This is different from the notation in \cite{Yang1,Yang2}, where the sets $E^1_B$ and $E^1_R$ are listed as $e_1,\cdots,e_m$ and $f_1,\cdots, f_n$, and $\theta$ is a map on sets of labels. Yang writes ``$e_if_j=f_{j'}e_{i'}$ where $\theta(i,j)=(i',j')$''.} $\theta_1(ef)\in E^1_R$ and $\theta_2(ef)\in E^1_B$ for the  edges in $\theta(ef)$, so that $\theta(ef)=\theta_1(ef)\theta_2(ef)$.

A theorem of Kumjian and Pask \cite[\S6]{kp} says that there is a unique $2$-graph $\Lambda_\theta$ such that $\Lambda_\theta^{(1,0)}=E^1_B$, $\Lambda_\theta^{(0,1)}=E^1_R$, and the paths of degree $(1,1)$ have factorisations $ef$ and $\theta(ef)$. The $C^*$-algebra $\OO_\theta:=C^*(\Lambda_\theta)$ has been studied by Davidson and Yang \cite{DY,Yang1, Yang2}. The $C^*$-algebra $\OO_\theta$ is  generated by two Cuntz families $\{s_e:e\in E^1_B\}$ and $\{s_f:f\in E^1_R\}$ such that $s_es_f=s_{\theta_1(ef)}s_{\theta_2(ef)}$, and is universal for such families. As usual, we have 
\[
\OO_{\theta}=\clsp\{s_\lambda s_\mu^*:\lambda, \mu\in \Lambda_\theta\}.
\]
There is a gauge action $\gamma$ of $\T^2$ characterised by $\gamma_z(s_\lambda s_{\mu}^*)=z^{d(\lambda)-d(\mu)}s_\lambda s_{\mu}^*$, and the core $\OO_{\theta}^\gamma$ is the fixed-point algebra
\[
\OO_{\theta}^\gamma=\clsp\{s_\lambda s_\mu^*:d(\lambda)=d(\mu)\},
\]
which is a UHF algebra of type $(N_1N_2)^\infty$.

Yang observes in \cite[\S3.1]{Yang1} that there is an action $\alpha:\N^2\to \End\OO_\theta$ such that
\[
\alpha_n(a)=\sum_{\lambda\in \Lambda_\theta^n}s_\lambda a s_\lambda^*\quad\text{ for $a\in \OO_\theta$.}
\]
This is the analogue for the rank-2 graph algebra $\OO_\theta$ of the canonical unital endomorphism $\alpha(a)=\sum_{1=1}^n s_ias_i^*$ of the Cuntz algebra $\OO_n=C^*(s_i)$. In \cite[\S 3]{Yang1}, Yang investigates other unital endomorphisms of the $\OO_\theta$, and finds striking parallels with the known properties of endomorphisms of the Cuntz algebras. 

An Huef and Raeburn have recently considered an Exel system $(\OO_n^\gamma,\alpha,L)$ involving the restriction of the canonical endomorphism $\alpha$ to the core, and showed that $\OO_n^\gamma\rtimes_{\alpha,L}\N$ is the Cuntz algebra $\OO_{n^2}$. (To recover this from \cite[Theorem~6.6]{aHR}, take $N=n$ in \cite[\S6]{aHR}, so that the projection $p$ appearing there is the identity of $M_n(\C)$.) Here we show that the analogous Exel-Larsen system for Yang's endomorphic action has similar properties: the crossed product is another $\OO_\eta$ associated to a larger graph $\Lambda_\eta$. We first need an action by transfer operators.

\begin{prop}
For each $n\in \N^2$, the function $L_n:\OO_\theta\to \OO_\theta$ defined by 
\[
L_n(a)=\frac{1}{N^n}\sum_{\lambda\in \Lambda_\theta^n}s_\lambda^* a s_\lambda
\]
is a transfer operator for $\alpha_n$ such that $L_n(1)=1$, and we then have $L_m\circ L_n=L_{m+n}$.
\end{prop}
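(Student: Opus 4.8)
The plan is to verify the three required properties—positivity, the transfer-operator identity $L_n(\alpha_n(a)b)=aL_n(b)$ together with $L_n(1)=1$, and the composition law $L_m\circ L_n=L_{m+n}$—directly from the relations satisfied by the generators, the only substantive input being the factorisation property of the $2$-graph. Since $\Lambda_\theta$ has a single vertex, for each $n\in\N^2$ the family $\{s_\lambda:\lambda\in\Lambda_\theta^n\}$ is a Cuntz family: the range projections $s_\lambda s_\lambda^*$ are mutually orthogonal with sum $1$, so in particular $s_\lambda^* s_\mu=\delta_{\lambda,\mu}1$ for $\lambda,\mu\in\Lambda_\theta^n$, and $s_\lambda^* s_\lambda=1$. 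I will use these orthogonality relations, the identity $s_\lambda s_\mu=s_{\lambda\mu}$ for the concatenation of composable paths, and the fact that $|\Lambda_\theta^n|=N^n$.

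First I would dispose of the easy points. Linearity of $L_n$ is clear, and positivity follows by writing a positive element as $a=c^*c$, since then $L_n(a)=N^{-n}\sum_{\lambda\in\Lambda_\theta^n}(cs_\lambda)^*(cs_\lambda)\ge 0$. For $L_n(1)=1$ I would compute $L_n(1)=N^{-n}\sum_{\lambda\in\Lambda_\theta^n}s_\lambda^* s_\lambda=N^{-n}\sum_{\lambda\in\Lambda_\theta^n}1=1$, using $s_\lambda^*s_\lambda=1$ and the count $|\Lambda_\theta^n|=N^n$. For the transfer-operator identity I would expand, using $\alpha_n(a)=\sum_{\mu\in\Lambda_\theta^n}s_\mu a s_\mu^*$, to obtain $L_n(\alpha_n(a)b)=N^{-n}\sum_{\lambda,\mu}s_\lambda^* s_\mu\,a\,s_\mu^* b\,s_\lambda$, and then invoke the orthogonality $s_\lambda^* s_\mu=\delta_{\lambda,\mu}1$ to collapse the double sum to $N^{-n}\sum_\lambda a\,s_\lambda^* b\,s_\lambda=aL_n(b)$. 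Thus $L_n$ is a transfer operator for $\alpha_n$ with $L_n(1)=1$.

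The composition law is where the graph structure enters, and it is the step I would single out as the heart of the argument (though it remains routine). Applying $L_n$ first and then $L_m$ gives
\[
(L_m\circ L_n)(a)=\frac{1}{N^m N^n}\sum_{\mu\in\Lambda_\theta^m}\sum_{\lambda\in\Lambda_\theta^n}s_\mu^* s_\lambda^*\,a\,s_\lambda s_\mu=\frac{1}{N^{m+n}}\sum_{\mu\in\Lambda_\theta^m}\sum_{\lambda\in\Lambda_\theta^n}s_{\lambda\mu}^*\,a\,s_{\lambda\mu},
\]
where I have used $s_\lambda s_\mu=s_{\lambda\mu}$ and hence $s_\mu^* s_\lambda^*=s_{\lambda\mu}^*$, together with $N^{m+n}=N^m N^n$. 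The decisive point is that the map $(\lambda,\mu)\mapsto\lambda\mu$ is a bijection of $\Lambda_\theta^n\times\Lambda_\theta^m$ onto $\Lambda_\theta^{n+m}$: this is exactly the unique-factorisation property of the $2$-graph $\Lambda_\theta$, which asserts that every path of degree $n+m$ factors uniquely as a degree-$n$ path followed by a degree-$m$ path. Reindexing the double sum over $\nu=\lambda\mu\in\Lambda_\theta^{n+m}$ then yields $N^{-(m+n)}\sum_{\nu\in\Lambda_\theta^{n+m}}s_\nu^*\,a\,s_\nu=L_{m+n}(a)$, completing the proof.
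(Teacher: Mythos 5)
Your proof is correct and follows essentially the same route as the paper's: orthogonality of the range projections $s_\lambda s_\lambda^*$ for $\lambda\in\Lambda_\theta^n$ collapses the double sum in the transfer-operator identity, and the unique factorisation property identifies the index set $\Lambda_\theta^n\times\Lambda_\theta^m$ with $\Lambda_\theta^{m+n}$ for the composition law. The only (welcome) addition is that you spell out positivity via $a=c^*c$ and are slightly more careful than the paper about the order of concatenation in the reindexing step; neither changes the substance.
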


\begin{proof}
The map $L$ is clearly positive and linear, and $L_n(1)=1$ because each $s_\lambda$ is an isometry and there are $N^n$ summands. For $a,b\in \OO_\theta$ we have
\[
L_n(\alpha_n(a)b)=\frac{1}{N^n}\sum_{\lambda\in \Lambda_\theta^n}s_\lambda^*\Big(\sum_{\mu\in \Lambda_\theta^n}s_\mu a s_\mu^*\Big)bs_\lambda.
\]
Since $\lambda$ and $\mu$ have the same degree, $s_\lambda^*s_\mu$ vanishes unless $\mu=\lambda$. Thus the inside sum has just one nonzero term,
\[
L_n(\alpha_n(a)b)=\frac{1}{N^n}\sum_{\lambda\in \Lambda_\theta^n}a s_\lambda^*bs_\lambda,
\]
 and factoring out $a$ gives $L_n(\alpha_n(a)b)=aL_n(b)$. So $L_n$ is a transfer operator for $\alpha_n$.
 
To see that $L_m\circ L_n=L_{m+n}$, we take $a\in \OO_\theta$ and calculate:
 \begin{equation}\label{Laction}
 L_m\circ L_n(a)= \frac{1}{N^m}\sum_{\mu\in\Lambda_{\theta}^m}s_\mu^*\Big(\frac{1}{N^n}\sum_{\lambda\in\Lambda_{\theta}^n}s_\lambda^* as_\lambda\Big)s_\mu=\frac{1}{N^{m+n}}\sum_{\substack{\mu\in\Lambda_{\theta}^m \\ \lambda\in\Lambda_{\theta}^n}}s_{\mu\lambda}^*as_{\mu\lambda}. 
 \end{equation}
The factorisation property implies that $\{\mu\lambda:\mu\in \Lambda_\theta^m,\ \lambda\in\Lambda_{\theta}^n\}$ is just $\Lambda^{m+n}_\theta$, and hence \eqref{Laction} implies that  $L_m\circ L_n(a)=L_{m+n}(a)$. 
\end{proof}

Both $\alpha_n$ and $L_n$ map the core $\OO_\theta^\gamma$ into itself, and hence $(\OO_\theta^\gamma,\N^2,\alpha,L)$ is an Exel-Larsen system. We now describe the corresponding crossed product $\OO_\theta^\gamma\rtimes_{\alpha,L}\N^2$. Recall that we are writing $k_M=\{k_{M,n}:n\in \N^2\}$ for the canonical Cuntz-Pimsner covariant representation of $M$ in $\OO(M)=\OO_\theta^\gamma\rtimes_{\alpha,L}\N^2$.   

\begin{thm}\label{ExelcpofOtheta}
Consider the coloured directed graph $F=(F^0,F^1=F^1_B\sqcup F^1_R)$ with $F^0=E^0$, $F^1_B=E_{BB}^2$ and $F^1_R=E_{RR}^2$, and define $\eta:F^2_{BR}\to F^2_{RB}$ by
\begin{equation}\label{defeta}
\eta((ef)(gh))=(\theta_1(eg)\theta_1(fh))(\theta_2(eg)\theta_2(fh)).
\end{equation}
Then $\eta$ is a bijection. If $\{t_{ef}:ef\in F^1_B\}$ and $\{t_{gh}:gh\in F^1_R\}$ denote the Cuntz families which generate $\OO_\eta$, then there is an isomorphism $\pi$ of $\OO_{\eta}$ onto $\OO_\theta^\gamma\rtimes_{\alpha,L}\N^2$ such that
\[
\pi(t_{ef})=k_{M,(1,0)}(N_1^{1/2}q_{(1,0)}(s_es_f^*))\quad\text{and}\quad \pi(t_{gh})=k_{M,(0,1)}(N_2^{1/2}q_{(0,1)}(s_gs_h^*)),
\]
for $ef\in F^1_B$ and $gh\in F^1_R$.
\end{thm}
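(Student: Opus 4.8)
The plan is to construct $\pi$ from the universal property of $\OO_\eta$, by verifying that the proposed images
\[
T_{ef}:=k_{M,(1,0)}(N_1^{1/2}q_{(1,0)}(s_es_f^*))\ \ (ef\in F^1_B),\qquad T_{gh}:=k_{M,(0,1)}(N_2^{1/2}q_{(0,1)}(s_gs_h^*))\ \ (gh\in F^1_R)
\]
form two Cuntz families satisfying the defining $\eta$-commutation, and then to prove $\pi$ injective via a gauge-invariant uniqueness theorem and surjective by a generation argument. First, $\eta$ is a bijection for a soft reason: the formula \eqref{defeta} exhibits $\eta$ as the composite of the bijection $(eg,fh)\mapsto(\theta(eg),\theta(fh))$ with the fixed ``regrouping'' bijections $(ef)(gh)\leftrightarrow(eg,fh)$ and $(\gamma_1\delta_1,\gamma_2\delta_2)\leftrightarrow(\gamma_1\gamma_2)(\delta_1\delta_2)$ on the relevant product sets, so bijectivity of $\theta$ forces bijectivity of $\eta$, and hence the $2$-graph $\Lambda_\eta$ and the algebra $\OO_\eta$ exist.

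To see that $\{T_{ef}\}$ is a Cuntz family I would use the bimodule identities. The relation $k_{M,(1,0)}(\xi)^*k_{M,(1,0)}(\zeta)=k_{M,0}(\langle\xi,\zeta\rangle_{L_{(1,0)}})$, the formula defining $L_{(1,0)}$, and the collapse $s_e^*s_{e'}=\delta_{ee'}$, $\sum_ds_d^*s_fs_{f'}^*s_d=\delta_{ff'}$ give $T_{ef}^*T_{e'f'}=\delta_{ee'}\delta_{ff'}k_{M,0}(1)$, so the $T_{ef}$ are isometries with orthogonal ranges. For $\sum_{ef}T_{ef}T_{ef}^*=1$ I would check that $w_{ef}:=N_1^{1/2}q_{(1,0)}(s_es_f^*)$ is an orthonormal basis of $M_{(1,0)}$ — the reconstruction $\sum_{ef}\Theta_{w_{ef},w_{ef}}=\phi_{(1,0)}(1)=\mathrm{id}$ reduces to two applications of $\sum_es_es_e^*=1$ — and then invoke Cuntz-Pimsner covariance, which yields $k_{M,0}(1)=\sum_{ef}k_{M,(1,0)}(w_{ef})k_{M,(1,0)}(w_{ef})^*=\sum_{ef}T_{ef}T_{ef}^*$; the red family is identical. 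For the commutation I would use the product-system multiplication \eqref{mult in prod sys} to rewrite $T_{ef}T_{gh}=N_1^{1/2}N_2^{1/2}k_{M,(1,1)}(q_{(1,1)}(s_es_f^*\alpha_{(1,0)}(s_gs_h^*)))$, simplify the coefficient using $\sum_cs_cs_c^*=1$ and the $2$-graph relation $s_es_g=s_{\theta_1(eg)}s_{\theta_2(eg)}$ (and its adjoint on $s_h^*s_f^*$), and do the same for $T_{\theta_1(eg)\theta_1(fh)}T_{\theta_2(eg)\theta_2(fh)}$; both reduce to $k_{M,(1,1)}(q_{(1,1)}(s_{\theta_1(eg)}s_{\theta_2(eg)}s_{\theta_2(fh)}^*s_{\theta_1(fh)}^*))$. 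Universality of $\OO_\eta$ then produces $\pi$.

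For injectivity I would apply the gauge-invariant uniqueness theorem for $2$-graphs \cite{kp}. The Cuntz-Pimsner algebra $\OO(M)$ carries the canonical gauge action $\delta$ of $\T^2$ determined by $\delta_z(k_{M,n}(\xi))=z^nk_{M,n}(\xi)$; since the blue generators of $\OO_\eta$ have degree $(1,0)$ and the red ones degree $(0,1)$, $\pi$ intertwines the gauge action $\gamma$ of $\Lambda_\eta$ with $\delta$. As $\Lambda_\eta$ has a single vertex, is row-finite, and has no sources, and $\pi(1)=k_{M,0}(1)\neq0$ in the nonzero unital algebra $\OO(M)$, the theorem gives injectivity of $\pi$.

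The hard part will be surjectivity, and the subtlety is that one cannot merely collapse the level-one computation: the core $\OO_\theta^\gamma$ is the UHF algebra of type $(N_1N_2)^\infty$, so its matrix units of any fixed degree generate only a finite-dimensional subalgebra, and every degree must be reached. The key step is to show $k_{M,0}(\OO_\theta^\gamma)\subseteq\pi(\OO_\eta)$ by realising each matrix unit $k_{M,0}(s_\lambda s_{\lambda'}^*)$ with $d(\lambda)=d(\lambda')=(j,k)$ as a sum over ``source'' paths. Concretely, a product of $j$ blue and $k$ red $T$'s equals $(N^{(j,k)})^{1/2}k_{M,(j,k)}(q_{(j,k)}(s_\lambda s_\phi^*))$, where $\lambda$ and $\phi$ are the degree-$(j,k)$ paths written in all-blue-then-all-red form; summing $(\text{such a product})(\text{such a product})^*$ over all source paths $\phi$ collapses, by Cuntz-Pimsner covariance at level $(j,k)$ exactly as in the level-one completeness check, to $k_{M,0}(s_\lambda s_{\lambda'}^*)$, and since every degree-$(j,k)$ path has a unique such factorisation we obtain all matrix units. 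Once $k_{M,0}(\OO_\theta^\gamma)\subseteq\pi(\OO_\eta)$, left-multiplying the $T$'s by $k_{M,0}(\OO_\theta^\gamma)$ produces $k_{M,(1,0)}(M_{(1,0)})$ and $k_{M,(0,1)}(M_{(0,1)})$, hence $k_{M,n}(M_n)$ for all $n$ by taking products, and therefore all of $\OO(M)=\clsp\{k_{M,m}(\xi)k_{M,n}(\zeta)^*\}$. This shows $\pi$ is onto and completes the proof that $\pi$ is an isomorphism.
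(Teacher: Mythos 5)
Your proposal is correct and follows essentially the same route as the paper: the orthonormal bases $m^n_{\mu\nu}=N^{n/2}q_n(s_\mu s_\nu^*)$, Cuntz-Pimsner covariance to obtain the two Cuntz families and the $\eta$-commutation relation, the gauge-invariant uniqueness theorem for injectivity, and recovery of the matrix units of the core for surjectivity. The one step you leave implicit --- that the sum over source paths collapses to $k_{M,0}(s_\lambda s_{\lambda'}^*)$ --- is precisely the identity $\phi_n(s_\mu s_\nu^*)=\sum_{d(\lambda)=n}\Theta_{m^n_{\mu\lambda},m^n_{\nu\lambda}}$ that the paper checks on basis elements, a routine extension of your level-one completeness computation.
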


We first show that $\eta$ is a bijection.  If $\eta((ef)(gh))=\eta((e'f')(g'h'))$, then repeated applications of the factorisation property in $\Lambda_\theta$ show that $e=e'$, $f=f'$, $g=g'$ and $h=h'$, whence $ef=e'f'$ and $gh=g'h'$. So $\eta$ is an injection of $F^2_{BR}=E^2_{BB}E^2_{RR}$ into $F^2_{RB}=E^2_{RR}E^2_{BB}$. Since both sets have $N_1^2N_2^2$ elements, $\eta$ is a bijection. 

In view of \cite[Lemma~2.6]{EaHR} and \cite[Lemma~6.3]{aHR}, it is not surprising that the right Hilbert $\OO_\theta^\gamma$-modules $M_n$ have orthonormal bases.

\begin{prop}
For every $n\in \N^2$, $\big\{m_{\mu\nu}^n:={N^{n/2}}q_n(s_\mu s_\nu^*):\mu,\nu\in \Lambda_\theta^n\big\}$ is an orthonormal basis for the right Hilbert $\OO_\theta^\gamma$-module $M_n$. The multiplication in $M$ satisfies 
\begin{equation}\label{prodbasis}
m_{\mu\nu}^m m_{\alpha\beta}^n=m_{(\mu\alpha)(\nu\beta)}^{m+n}.
\end{equation}
\end{prop}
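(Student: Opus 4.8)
The plan is to verify directly the two defining properties of an orthonormal basis—orthonormality together with the reconstruction identity $x=\sum_i e_i\langle e_i,x\rangle$—and then to read off the multiplication formula from \eqref{mult in prod sys}. The computational inputs are the inner product $\langle q_n(a),q_n(b)\rangle=L_n(a^*b)$ on $M_n$, the explicit form of $L_n$, and the single-vertex Cuntz--Krieger relations: for paths of the same degree $s_\mu^* s_\nu=\delta_{\mu,\nu}1$, and $\sum_{\mu\in\Lambda_\theta^n}s_\mu s_\mu^*=1$ (equivalently, $\alpha_n$ is unital). Since the bimodules here are finitely generated and require no completion, every element of $M_n$ has the form $q_n(x)$ with $x\in\OO_\theta^\gamma$, so it suffices to check both identities on such elements.

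For orthonormality I would compute $\langle m_{\mu\nu}^n,m_{\alpha\beta}^n\rangle=N^nL_n(s_\nu s_\mu^* s_\alpha s_\beta^*)$ and collapse the two resulting sums using $s_\mu^* s_\alpha=\delta_{\mu,\alpha}1$ and the fact that $s_\lambda^* s_\nu s_\beta^* s_\lambda$ is nonzero inside $L_n$ only for $\lambda=\nu=\beta$; the normalisation $N^n$ exactly cancels the $N^{-n}$ from $L_n$, leaving $\delta_{\mu,\alpha}\delta_{\nu,\beta}1$. For the basis property the key intermediate computation is
\[
\langle m_{\mu\nu}^n,q_n(x)\rangle=N^{-n/2}s_\mu^* x s_\nu,
\]
again immediate once the $L_n$-sum is collapsed. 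Feeding this into $\sum_{\mu,\nu}m_{\mu\nu}^n\langle m_{\mu\nu}^n,q_n(x)\rangle$ and using the right action $q_n(a)\cdot b=q_n(a\alpha_n(b))$, the term indexed by $(\mu,\nu)$ reduces to $q_n(s_\mu s_\mu^* x s_\nu s_\nu^*)$ after a sum over $\Lambda_\theta^n$ is collapsed against $s_\nu^* s_\rho=\delta_{\nu,\rho}$. Summing over $\mu$ and $\nu$ and applying $\sum_\mu s_\mu s_\mu^*=1$ twice returns $q_n(x)$, establishing the reconstruction identity.

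For the multiplication formula I would apply \eqref{mult in prod sys} with $p=m$, $r=n$ to get $m_{\mu\nu}^m m_{\alpha\beta}^n=N^{(m+n)/2}q_{m+n}\bigl(s_\mu s_\nu^*\alpha_m(s_\alpha s_\beta^*)\bigr)$, so the entire content is to show $s_\mu s_\nu^*\alpha_m(s_\alpha s_\beta^*)=s_{\mu\alpha}s_{\nu\beta}^*$. Expanding $\alpha_m(s_\alpha s_\beta^*)=\sum_{\rho\in\Lambda_\theta^m}s_\rho s_\alpha s_\beta^* s_\rho^*$, the factor $s_\nu^* s_\rho=\delta_{\nu,\rho}$ collapses the sum to $\rho=\nu$, and then $s_\nu^* s_\nu=1$ together with the factorisation property $s_\mu s_\alpha=s_{\mu\alpha}$ and $s_\beta^* s_\nu^*=s_{\nu\beta}^*$ gives the claim; the normalisation matches because $N^{m/2}N^{n/2}=N^{(m+n)/2}$. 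I do not expect a genuine obstacle, since this is the single-vertex analogue of the Cuntz-algebra computation behind \cite[Lemma~6.3]{aHR}; the one step that is more than symbol-pushing is the completeness half of the basis claim, where one must be sure the reconstruction identity is being verified on all of $M_n$ (which is legitimate precisely because $M_n$ is exhausted by the vectors $q_n(x)$ and needs no completion) and that each collapse of a sum is justified by same-degree orthogonality rather than by a spurious vertex projection.
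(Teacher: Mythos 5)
Your proposal is correct and follows essentially the same route as the paper: the same collapse of the $\alpha_m$-sum gives $s_\mu s_\nu^*\alpha_m(s_\alpha s_\beta^*)=s_{\mu\alpha}s_{\nu\beta}^*$ and hence \eqref{prodbasis}, and your orthonormality computation is the one in the paper. The only cosmetic difference is that the paper deduces the spanning property from the right-module action $q_m(s_\mu s_\nu^*)\cdot(s_\alpha s_\beta^*)=q_m(s_{\mu\alpha}s_{\nu\beta}^*)$ rather than verifying the reconstruction identity $q_n(x)=\sum_{\mu,\nu}m^n_{\mu\nu}\cdot\langle m^n_{\mu\nu},q_n(x)\rangle$ directly, but both arguments rest on the same same-degree orthogonality relations and on $\sum_{\mu\in\Lambda_\theta^n}s_\mu s_\mu^*=1$.
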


\begin{proof}
Fix $\mu,\nu\in \Lambda_\theta^m$ and $\alpha,\beta\in \Lambda_\theta^n$, and recall from \eqref{mult in prod sys} that the multiplication in $M$ satisfies
\[
q_m(s_{\mu}s_{\nu}^*)q_n(s_\alpha s_\beta^*)=q_{m+n}(s_{\mu}s_{\nu}^*\alpha_m(s_\alpha s_\beta^*)).
\]
We have
\[
s_{\mu}s_{\nu}^*\alpha_m(s_\alpha s_\beta^*)=s_{\mu}s_{\nu}^*\Big(\sum_{d(\lambda)=m}s_\lambda s_\alpha s_\beta^*s_\lambda^*\Big)=\sum_{d(\lambda)=m}s_{\mu}s_{\nu}^*s_\lambda s_\alpha s_\beta^*s_\lambda^*;
\]
since $d(\nu)=m=d(\lambda)$ and the $\{s_\lambda:d(\lambda)=m\}$ are isometries, only the term with $\lambda=\nu$ survives, and we have
\begin{equation}\label{compprod}
s_{\mu}s_{\nu}^*\alpha_m(s_\alpha s_\beta^*)=s_{\mu}s_\alpha s_\beta^*s_\nu^*=s_{\mu\alpha}s_{\nu\beta}^*.
\end{equation}
This immediately gives the formula \eqref{prodbasis}, and since
\[
q_m(s_{\mu}s_{\nu}^*)\cdot s_\alpha s_\beta^*=q_m(s_{\mu}s_{\nu}^*\alpha_m(s_\alpha s_\beta^*)),
\]
\eqref{compprod} also implies that $\{m^m_{\mu\nu}:\mu,\nu\in\Lambda_\theta^m\}$ generates $M_m$ as a right Hilbert $\OO_\theta^\gamma$-module.

To check that $\{m_{\mu\nu}^n\}$ is orthonormal, we take $\mu,\nu,\alpha,\beta\in \Lambda_\theta^n$, and compute
\[
\langle m_{\mu\nu}^n, m_{\alpha\beta}^n\rangle
=N^nL_n((s_\mu s_\nu^*)^*(s_\alpha s_\beta^*))
=\sum_{d(\lambda)=n}s_{\lambda}^*s_\nu s_\mu^*s_\alpha s_\beta^*s_\lambda.
\]
Since all the paths have the same degree $n$, all the terms vanish unless $\mu=\alpha$. When $\mu=\alpha$, only the term with $\lambda=\nu$ survives, and it is zero unless $\beta=\nu$. So the inner product is zero unless $(\mu,\nu)=(\alpha,\beta)$, and then is $1$ because all the $s_\mu$ are isometries.
\end{proof}

As in \cite{EaHR} and \cite{aHR}, the orthonormal bases for $M_n$ give rise to  Cuntz families in $\OO(M)$:

\begin{prop}\label{bigCKfamily}
For $ef\in F^1_B=E^2_{BB}$ and $gh\in F^1_R=E^2_{RR}$, we set
\[
T_{ef}:=k_{M,(1,0)}(m_{ef}^{(1,0)})\quad\text{and}\quad T_{gh}:=k_{M,(0,1)}(m_{gh}^{(0,1)}).
\]
Then $\{T_{ef}:ef\in F^1_B\}$ and $\{T_{gh}:gh\in F^1_R\}$ are Cuntz families, and they satisfy
\begin{equation}\label{comrel}
T_{ef}T_{gh}=T_{\eta_1((ef)(gh))}T_{\eta_2((ef)(gh))}.
\end{equation}
\end{prop}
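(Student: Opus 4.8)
The plan is to extract everything from two ingredients: the fact that $k_M$ is a (Cuntz--Pimsner covariant) representation of the product system $M$, and the fact, established in the preceding proposition, that $\{m^{(1,0)}_{ef}\}$ and $\{m^{(0,1)}_{gh}\}$ are orthonormal bases of $M_{(1,0)}$ and $M_{(0,1)}$.

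For the Cuntz relations I would first use that each pair $(k_{M,n},k_{M,(0,0)})$ is a representation of $M_n$, so that $k_{M,n}(x)^*k_{M,n}(y)=k_{M,(0,0)}(\langle x,y\rangle)$. Feeding in orthonormality shows at once that $T_{ef}^*T_{ef}=k_{M,(0,0)}(1)=1$ and $T_{ef}^*T_{e'f'}=0$ for $ef\neq e'f'$, so the $T_{ef}$ are isometries with mutually orthogonal ranges, and likewise for the $T_{gh}$. For the completeness relation I would invoke Cuntz--Pimsner covariance: since $\{m^{(1,0)}_{ef}\}$ is an orthonormal basis, $\phi_{(1,0)}(1)=\mathrm{id}_{M_{(1,0)}}=\sum_{ef}\Theta_{m^{(1,0)}_{ef},m^{(1,0)}_{ef}}$, and applying the induced homomorphism on $\KK(M_{(1,0)})$ together with covariance and $k_{M,(0,0)}(1)=1$ yields $\sum_{ef}T_{ef}T_{ef}^*=1$; the red family is identical. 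Thus both are Cuntz families.

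The substance is the relation \eqref{comrel}. Using that $k_M$ is multiplicative on the product system, $k_{M,m}(x)k_{M,n}(y)=k_{M,m+n}(xy)$, and then \eqref{prodbasis}, I would rewrite the left-hand side as
\[
T_{ef}T_{gh}=k_{M,(1,1)}\big(m^{(1,0)}_{ef}m^{(0,1)}_{gh}\big)=k_{M,(1,1)}\big(m^{(1,1)}_{(eg)(fh)}\big),
\]
so that the blue--red paths $eg$ and $fh$ become the two indices. Writing out \eqref{defeta}, the target edges are the red path $\eta_1((ef)(gh))=\theta_1(eg)\theta_1(fh)$ and the blue path $\eta_2((ef)(gh))=\theta_2(eg)\theta_2(fh)$; abbreviating these to $\eta_1,\eta_2$, the same two steps give
\[
T_{\eta_1}T_{\eta_2}=k_{M,(1,1)}\big(m^{(0,1)}_{\eta_1}m^{(1,0)}_{\eta_2}\big)=k_{M,(1,1)}\big(m^{(1,1)}_{(\theta(eg))(\theta(fh))}\big),
\]
once one recognises that the two resulting indices $\theta_1(eg)\theta_2(eg)$ and $\theta_1(fh)\theta_2(fh)$ are exactly $\theta(eg)$ and $\theta(fh)$. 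Both basis vectors of $M_{(1,1)}$ carry the same scalar $N^{(1,1)/2}$, so the identity reduces to $q_{(1,1)}(s_{eg}s_{fh}^*)=q_{(1,1)}(s_{\theta(eg)}s_{\theta(fh)}^*)$, which holds because $eg,\theta(eg)$ and $fh,\theta(fh)$ are simply the two factorisations of single degree-$(1,1)$ paths in $\Lambda_\theta$, forcing $s_{eg}=s_{\theta(eg)}$ and $s_{fh}=s_{\theta(fh)}$ in $\OO_\theta$.

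I expect the only real difficulty to be the combinatorial bookkeeping: keeping the blue/red roles and the left/right slots of the $m^n_{\mu\nu}$ aligned as the paths are pushed through $\eta$ and $\theta$, and seeing that the definition \eqref{defeta} is arranged precisely so that applying the single-vertex factorisation rule $s_es_g=s_{\theta_1(eg)}s_{\theta_2(eg)}$ edge by edge collapses the two sides onto the same element of $\OO_\theta^\gamma$. Once the product-system and covariance formulas are written down, each step is a short computation.
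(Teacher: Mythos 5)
Your proof is correct and follows essentially the same route as the paper: orthonormality of $\{m^{(1,0)}_{ef}\}$ and $\{m^{(0,1)}_{gh}\}$ plus Cuntz--Pimsner covariance for the Cuntz relations, and the product formula \eqref{prodbasis} combined with the relation $s_es_g=s_{\theta_1(eg)}s_{\theta_2(eg)}$ for \eqref{comrel}. The only cosmetic difference is that you evaluate both sides of \eqref{comrel} down to the same element of $M_{(1,1)}$ and compare, whereas the paper runs a single chain of equalities from the left-hand side to the right.
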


\begin{proof}
Since 
\[
T_{ef}^*T_{ef}=k_{M,(1,0)}(m_{ef}^{(1,0)})^*k_{M,(1,0)}(m_{ef}^{(1,0)})=k_{M,0}(\langle m_{ef}^{(1,0)},m_{ef}^{(1,0)}\rangle)=k_{M,0}(1),
\]
and since $k_{M,0}$ is unital \cite[Corollary~3.5]{brv}, each $T_{ef}$ is an isometry. The reconstruction formula for the orthonormal basis $\{m^{(1,0)}_{ef}\}$ implies that 
\[
\phi_{(1,0)}(1)=1=\sum_{ef\in F^1_B}\Theta_{m^{(1,0)}_{ef},m^{(1,0)}_{ef}},
\]
and hence the Cuntz-Pimsner covariance of $(k_{M,(1,0)},k_{M,0})$ implies that
\[
k_{M,0}(1)=k_{M,0}(\phi_{(1,0)}(1))=\sum_{ef\in F^1_B}k_{M,(1,0)}(m_{ef}^{(1,0)})k_{M,(1,0)}(m_{ef}^{(1,0)})^*=\sum_{ef\in F^1_B}T_{ef}T_{ef}^*.
\]
Thus $\{T_{ef}\}$ is a Cuntz family. The same arguments work for $\{T_{gh}\}$.

To establish \eqref{comrel}, we calculate using \eqref{prodbasis}:
\begin{align*}
T_{ef}T_{gh}&=k_{M,(1,0)}\big(m_{ef}^{(1,0)}\big)k_{M,(0,1)}\big(m_{gh}^{(0,1)}\big)\\
&=k_{M,(1,1)}\big(m_{(eg)(fh)}^{(1,1)}\big)\\
&=N^{(1,1)}k_{M,(1,1)}\big(q_{(1,1)}(s_es_gs_h^*s_f^*)\big)\\
&=N^{(1,1)}k_{M,(1,1)}\big(q_{(1,1)}(s_{\theta_1(eg)}s_{\theta_2(eg)}s_{\theta_2(fh)}^*s_{\theta_1(fh)}^*)\big)\\
&=N^{(1,1)}k_{M,(1,1)}\big(q_{(1,1)}(s_{\theta_1(eg)\theta_2(eg)}s_{\theta_1(fh)\theta_2(fh)}^*)\big)\\
&=k_{M,(1,1)}\big(m_{\theta_1(eg)\theta_2(eg),\theta_1(fh)\theta_2(fh)}^{(1,1)}\big)\\
&=k_{M,(0,1)}\big(m_{\theta_1(eg)\theta_1(fh)}^{(0,1)}\big)k_{M,(1,0)}\big(m_{\theta_2(eg)\theta_2(fh)}^{(1,0)}\big)\\
&=T_{\eta_1((ef)(gh))}T_{\eta_2((ef)(gh))}.\qedhere
\end{align*}
\end{proof}

Proposition~\ref{bigCKfamily} and the universal property of $\OO_\eta$ imply that there is a homomorphism $\pi:=\pi_T:\OO_{\eta}\to \OO_\theta^\gamma\rtimes_{\alpha,L}\N^2$ such that
\begin{align*}
\pi(t_{ef})&=T_{ef}=k_{M,(1,0)}({N_1^{1/2}}q_{(1,0)}(s_es_f^*))\quad\text{for $ef\in F^1_B$, and }\\
\pi(t_{gh})&=T_{gh}=k_{M,(0,1)}({N_2^{1/2}}q_{(0,1)}(s_gs_h^*))\quad\text{for $gh\in F^1_R$.}
\end{align*}
We complete the proof of Theorem~\ref{ExelcpofOtheta} by proving the following Proposition.

\begin{prop}
The homomorphism $\pi$ is an isomorphism of $\OO_{\eta}$ onto $\OO_\theta^\gamma\rtimes_{\alpha,L}\N^2$.
\end{prop}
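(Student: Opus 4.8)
The plan is to prove that $\pi$ is both surjective and injective, obtaining injectivity from the gauge-invariant uniqueness theorem of Kumjian and Pask \cite{kp} applied to the single-vertex $2$-graph $\Lambda_\eta$. Since $\OO(M)=\clsp\{k_{M,m}(\xi)k_{M,n}(\zeta)^*:\xi\in M_m,\ \zeta\in M_n,\ m,n\in\N^2\}$ and $\pi(\OO_\eta)$ is closed, surjectivity reduces to showing that $k_{M,n}(M_n)\subseteq\pi(\OO_\eta)$ for every $n\in\N^2$.

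First I would treat $n=(p,q)\neq 0$. Every element of $M_n$ lies in the closed span of the orthonormal basis $\{m_{\sigma\rho}^n:\sigma,\rho\in\Lambda_\theta^n\}$, so it suffices to place each $k_{M,n}(m_{\sigma\rho}^n)$ in the image. Using the blue--red factorisations $\sigma=\sigma_B\sigma_R$ and $\rho=\rho_B\rho_R$ (with $\sigma_B,\rho_B$ of degree $(p,0)$ and $\sigma_R,\rho_R$ of degree $(0,q)$) together with the product formula \eqref{prodbasis}, one gets $m_{\sigma\rho}^n=m_{\sigma_B\rho_B}^{(p,0)}m_{\sigma_R\rho_R}^{(0,q)}$, and hence $k_{M,n}(m_{\sigma\rho}^n)=T_{\lambda_B}T_{\lambda_R}=\pi(t_{\lambda_B\lambda_R})$ for the blue path $\lambda_B$ and red path $\lambda_R$ in $\Lambda_\eta$ determined by these factors. (Here one uses that every pair of same-length blue, respectively red, paths in $\Lambda_\theta$ gives a genuine path in $\Lambda_\eta$, which holds because $E$ has one vertex.) Thus $k_{M,n}(M_n)\subseteq\pi(\OO_\eta)$ for all $n\neq 0$.

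The remaining, least automatic step is to recover the coefficient algebra $k_{M,0}(\OO_\theta^\gamma)$. I would do this through inner products rather than directly, exploiting that $(k_{M,n},k_{M,0})$ is a representation of $M_n$, so $k_{M,n}(\xi)^*k_{M,n}(\zeta)=k_{M,0}(\langle\xi,\zeta\rangle)$. Since $L_n(1)=1$, the transfer-operator identity gives $L_n(\alpha_n(a))=aL_n(1)=a$ for all $a\in\OO_\theta^\gamma$, and therefore, for any fixed $n\neq 0$,
\[
k_{M,0}(a)=k_{M,0}\big(L_n(\alpha_n(a))\big)=k_{M,n}\big(q_n(1)\big)^*\,k_{M,n}\big(q_n(\alpha_n(a))\big).
\]
Both $q_n(1)$ and $q_n(\alpha_n(a))$ lie in $M_n$, so by the previous paragraph the two factors on the right lie in $\pi(\OO_\eta)$; hence $k_{M,0}(\OO_\theta^\gamma)\subseteq\pi(\OO_\eta)$, and $\pi$ is surjective. (Alternatively one could use the reconstruction identity $\phi_n(s_\mu s_\alpha^*)=\sum_{\nu\in\Lambda_\theta^n}\Theta_{m_{\mu\nu}^n,m_{\alpha\nu}^n}$ and Cuntz--Pimsner covariance, in the style of Proposition~\ref{bigCKfamily}.)

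For injectivity I would invoke the gauge action. The product system $M$ carries a canonical gauge action $\beta$ of $\T^2$ on $\OO(M)$ with $\beta_z(k_{M,n}(\xi))=z^n k_{M,n}(\xi)$, arising from universality, and it is strongly continuous. Since the blue and red edges of $\Lambda_\eta$ have degrees $(1,0)$ and $(0,1)$, we get $\beta_z(T_{ef})=z_1T_{ef}$ and $\beta_z(T_{gh})=z_2T_{gh}$, so $\beta_z\circ\pi=\pi\circ\gamma_z$, where $\gamma$ is the canonical gauge action of $\T^2$ on $\OO_\eta$. Moreover $\Lambda_\eta$ has a single vertex $v$, and $\pi(p_v)=\sum_{ef\in F^1_B}T_{ef}T_{ef}^*=k_{M,0}(1)$ is nonzero because $k_{M,0}$ is injective. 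The gauge-invariant uniqueness theorem for row-finite $k$-graphs with no sources \cite{kp} then applies to the Cuntz--Krieger $\Lambda_\eta$-family $\{T_\lambda\}$ and shows $\pi$ is injective. Combining the two halves gives the isomorphism. The hard part is the surjectivity argument, and within it the recovery of the UHF core $k_{M,0}(\OO_\theta^\gamma)$ inside $\pi(\OO_\eta)$; everything else is a matter of matching degrees and invoking the uniqueness theorem.
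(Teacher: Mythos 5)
Your overall strategy --- gauge-invariant uniqueness for injectivity, then surjectivity via the basis elements and the coefficient algebra --- is the same as the paper's, and both the injectivity half and the reduction of each $k_{M,n}(m^n_{\sigma\rho})$ to a product of $T$'s are correct. But the step you yourself identify as the crux, recovering $k_{M,0}(\OO_\theta^\gamma)$, is circular as written. Your first paragraph only places the finitely many basis elements $k_{M,n}(m^n_{\sigma\rho})$, hence their \emph{linear} span, in $\pi(\OO_\eta)$; the assertion that ``every element of $M_n$ lies in the closed span of the orthonormal basis'' is false for the linear span (a finite-dimensional subspace of the free module $M_n$ of finite rank over the infinite-dimensional UHF algebra $\OO_\theta^\gamma$), because the reconstruction formula $\xi=\sum_{\sigma,\rho}m^n_{\sigma\rho}\cdot\langle m^n_{\sigma\rho},\xi\rangle$ involves module, not scalar, coefficients. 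So you may not conclude $k_{M,n}(M_n)\subseteq\pi(\OO_\eta)$ at the end of the first paragraph, and in the second paragraph the factor $k_{M,n}\big(q_n(\alpha_n(a))\big)$ is not yet known to lie in $\pi(\OO_\eta)$: for $a=s_\sigma s_\rho^*$ with $d(\sigma)=d(\rho)=m\neq 0$ one has $q_n(\alpha_n(a))=\sum_{d(\mu)=n}q_n(s_{\mu\sigma}s_{\mu\rho}^*)=\sum_{d(\mu)=n}m^n_{\mu\mu}\cdot\big(N^{-n/2}s_\sigma s_\rho^*\big)$, which genuinely uses the right action of the coefficient algebra --- exactly what you are trying to prove is in the range. (Your identity $k_{M,0}(a)=k_{M,n}(q_n(1))^*k_{M,n}(q_n(\alpha_n(a)))$ is correct, and $q_n(1)=N^{-n/2}\sum_{d(\mu)=n}m^n_{\mu\mu}$ is safely in the linear span of the basis, but the other factor is out of reach at this stage.)

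The parenthetical alternative you mention is the right repair, and is what the paper does: one checks on basis vectors that $\phi_n(s_\mu s_\nu^*)=\sum_{d(\lambda)=n}\Theta_{m^n_{\mu\lambda},m^n_{\nu\lambda}}$, and Cuntz--Pimsner covariance then gives $k_{M,0}(s_\mu s_\nu^*)=\sum_{d(\lambda)=n}k_{M,n}(m^n_{\mu\lambda})k_{M,n}(m^n_{\nu\lambda})^*$, a finite sum of products of images of basis elements with no module coefficients at all. This puts $k_{M,0}(\OO_\theta^\gamma)$ inside $\pi(\OO_\eta)$ using only what your first paragraph actually establishes, after which $k_{M,n}(M_n)\subseteq\pi(\OO_\eta)$ does follow from the module reconstruction formula. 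With that substitution your argument closes and coincides with the paper's proof.
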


\begin{proof}
The homomorphism $\pi$ is equivariant for the gauge action of $\T^2$ on $\OO_{\eta}=C^*(\Lambda_\eta)$ and the gauge action of $\T^2$ on $\OO_\theta^\gamma\rtimes_{\alpha,L}\N^2=\OO(M)$. Because each $T_{ef}$ is an isometry, the homomorphism $\pi$ is unital, and hence the image of the only vertex projection $p_v=1_{C^*(\Lambda_{\eta})}$ is nonzero. Thus the gauge-invariant uniqueness theorem \cite[Theorem~3.4]{kp} implies that $\pi$ is injective.

The formula \eqref{prodbasis} implies that the image of $\pi$ contains the image $k_{M,n}(m^n_{\mu\nu})$ of every basis element $m^n_{\mu\nu}$ for $M_n$. Since these elements generate $k_{M,n}(M_n)$ as a $k_{M,0}(\OO_\theta^\gamma)$-module, we have $k_{M,n}(M_n)\subset\pi(\OO_\eta)$ for each $n\in\N^2$. So to see that $\pi$ is surjective it suffices to show that the range of $\pi$ contains the image $k_{M,0}(\OO_\theta^\gamma)$ of the coefficient algebra; since the range is a $C^*$-subalgebra, it suffices to show that the range contains each $k_{M,0}(s_\mu s_\nu^*)$ with $d(\mu)=d(\nu)$. So fix such an element with $d(\mu)=d(\nu)=n$, say. We claim that $\phi_{n}(s_\mu s_\nu^*)\in \LL(M_n)$ satisfies
\begin{equation}\label{idleftaction}
\phi_{n}(s_\mu s_\nu^*)=\sum_{d(\lambda)=n}\Theta_{m^n_{\mu\lambda},m^n_{\nu\lambda}}.
\end{equation}

Both sides of \eqref{idleftaction} are adjointable operators on the right Hilbert $\OO_\theta^\gamma$-module $M_n$, and in particular are right $\OO_\theta^\gamma$-module homomorphisms, so it suffices to check that they agree on a typical basis element $m^n_{\alpha\beta}$. We have
\begin{align*}
\phi_n(s_\mu s_\nu^*)(m^n_{\alpha\beta})&={N^{n/2}}s_\mu s_\nu^*q_n(s_\alpha s_\beta^*)={N^{n/2}}q_n(s_\mu s_\nu^*s_\alpha s_\beta^*)\\
&=\begin{cases}
0&\text{if $\nu\not=\alpha$}\\
{N^{n/2}}q_n(s_\mu s_\beta^*)=m^n_{\mu\beta}&\text{if $\nu=\alpha.$}
\end{cases}
\end{align*}
On the other hand, we have 
\[
\sum_{d(\lambda)=n}\Theta_{m^n_{\mu\lambda},m^n_{\nu\lambda}}(m^n_{\alpha\beta})=\sum_{d(\lambda)=n}m^n_{\mu\lambda}\cdot\langle m^n_{\nu\lambda},m^n_{\alpha\beta}\rangle.
\]
The inner product vanishes unless $(\nu,\lambda)=(\alpha,\beta)$, and then is the identity in $\OO_\theta^\gamma$. So every summand vanishes unless $\nu=\alpha$, and then only the $\lambda=\beta$ summand survives, so
\[
\sum_{d(\lambda)=n}\Theta_{m^n_{\mu\lambda},m^n_{\nu\lambda}}\big(m^n_{\alpha\beta}\big)=m_{\mu\beta}^n\cdot 1=m_{\mu\beta}^n.
\]
Thus we have \eqref{idleftaction}, as claimed.

In view of \eqref{idleftaction}, the Cuntz-Pimsner covariance of $k_M$ implies that
\[
k_{M,0}(s_\mu s_\nu^*)=(k_{M,n},k_{M,0})^{(1)}(\phi^M_{n}(s_\mu s_\nu^*))=\sum_{d(\lambda)=n}k_{M,n}\big(m^n_{\mu\lambda}\big)k_{M,n}\big(m^n_{\nu\lambda}\big)^*.
\]
Since we have already seen that each $k_{M,n}\big(m^n_{\mu\lambda}\big)$ belongs to the range of $\pi$, we deduce that $k_{M,0}(s_\mu s_\nu^*)$ belongs to the range of $\pi$, and $\pi$ is surjective.
\end{proof}

Davidson and Yang have found a necessary and sufficient condition for aperiodicity of $\OO(\Lambda_\theta)$ \cite[Theorem~3.1]{DY} (and we give a new proof of their result in an appendix). It is interesting to note that their condition is not easy to verify, and they discuss at some length an algorithm for determining whether a given $\Lambda_\theta$ is periodic.  However, their result gives the following criteria for simplicity of our crossed products.

\begin{cor}\label{simple and pi for core system}
Let $\Lambda_\theta$ be a $2$-graph with one vertex such that the corresponding directed graph has $N_1\ge 2$ blue edges and $N_2\ge 2$ red edges, and let $\eta$ be the bijection described in \eqref{defeta}. The Exel-Larsen crossed product $\OO_\theta^\gamma\rtimes_{\alpha,L}\N^2$ is simple if and only if, for every pair of integers $a$ and $b$ with $N_1^a=N_2^b$ and every bijection $\rho:\Lambda_{\eta}^{(a,0)}\to\Lambda_{\eta}^{(0,b)}$, there exists $\mu\in\Lambda_{\eta}^{(a,0)}$ and $\lambda\in\Lambda_{\eta}^{(0,b)}$ with $\mu\lambda\not=\rho(\mu)\rho^{-1}(\lambda)$. If $\OO_\theta^\gamma\rtimes_{\alpha,L}\N^2$ is simple, then it is purely infinite.
\end{cor}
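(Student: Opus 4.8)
The plan is to transport the question through the isomorphism $\OO_\theta^\gamma\rtimes_{\alpha,L}\N^2\cong\OO_\eta=C^*(\Lambda_\eta)$ of Theorem~\ref{ExelcpofOtheta} and then apply the Davidson--Yang analysis to the single-vertex $2$-graph $\Lambda_\eta$. Thus the first step is simply to observe that it suffices to decide when $C^*(\Lambda_\eta)$ is simple, and to show that simplicity forces pure infiniteness. The graph $\Lambda_\eta$ is again a $2$-graph with one vertex, but now with $N_1^2$ blue edges (the set $E_{BB}^2$) and $N_2^2$ red edges (the set $E_{RR}^2$); since $N_1,N_2\ge 2$ we have $N_1^2,N_2^2\ge 2$.

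Because $\Lambda_\eta$ has a single vertex it is automatically cofinal, so by the simplicity analysis for one-vertex $2$-graphs in \cite{DY} the algebra $C^*(\Lambda_\eta)$ is simple exactly when $\Lambda_\eta$ is aperiodic. I would then feed $\Lambda_\eta$ into the Davidson--Yang characterisation \cite[Theorem~3.1]{DY} (reproved in the Appendix): a one-vertex $2$-graph with $M_1$ blue and $M_2$ red edges is \emph{periodic} precisely when there are integers $a,b$ with $M_1^a=M_2^b$ and a bijection $\rho:\Lambda_\eta^{(a,0)}\to\Lambda_\eta^{(0,b)}$ for which $\mu\lambda=\rho(\mu)\rho^{-1}(\lambda)$ holds for every $\mu\in\Lambda_\eta^{(a,0)}$ and $\lambda\in\Lambda_\eta^{(0,b)}$. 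Here $M_1=N_1^2$ and $M_2=N_2^2$, and since these counts are positive $M_1^a=M_2^b\iff N_1^{2a}=N_2^{2b}\iff N_1^a=N_2^b$; negating the periodicity statement then yields exactly the displayed aperiodicity condition, which therefore characterises simplicity.

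For pure infiniteness the same argument Davidson and Yang give for $\OO_\theta$ applies verbatim to $\OO_\eta$, since $\Lambda_\eta$ is once more a one-vertex $2$-graph with at least two edges of each colour: once $C^*(\Lambda_\eta)$ is known to be simple, the presence of two distinct blue edges (as $N_1^2\ge 2$) supplies two isometries in the generating Cuntz family with orthogonal ranges, hence a proper isometry and an infinite projection, so the simple algebra is purely infinite. The main point needing care is matching the Davidson--Yang periodicity condition for $\Lambda_\eta$ \emph{verbatim} with the displayed one: one must verify that the edge counts $N_1^2,N_2^2$ collapse the constraint $(N_1^2)^a=(N_2^2)^b$ to the stated $N_1^a=N_2^b$ with the \emph{same} pair $(a,b)$, and that their commutation relation is literally $\mu\lambda=\rho(\mu)\rho^{-1}(\lambda)$. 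Because the Appendix reproves \cite[Theorem~3.1]{DY}, one can appeal directly to its precise formulation to settle these bookkeeping points.
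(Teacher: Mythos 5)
Your reduction of simplicity to aperiodicity of the one-vertex $2$-graph $\Lambda_\eta$ is exactly the paper's argument: transport through the isomorphism of Theorem~\ref{ExelcpofOtheta}, note that $\Lambda_\eta$ has $N_1^2$ blue and $N_2^2$ red edges, use ``cofinal $+$ aperiodic $\iff$ simple'' (the paper cites \cite[Theorem~3.1 and Lemma~3.2]{rs} for this, but the attribution is immaterial), and apply the Davidson--Yang periodicity criterion (Theorem~\ref{DYthm}) together with the observation that $(N_1^2)^a=(N_2^2)^b\iff N_1^a=N_2^b$. That half of your proposal matches the paper and is correct.

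The gap is in your pure infiniteness step. From simplicity plus the existence of two generating isometries with orthogonal ranges you obtain an infinite projection, and then assert that a simple $C^*$-algebra containing an infinite projection is purely infinite. That implication is false in general: R\o rdam constructed a simple, unital, separable, nuclear $C^*$-algebra containing both an infinite and a nonzero finite projection, hence infinite but not purely infinite. Pure infiniteness demands an infinite projection in \emph{every} nonzero hereditary subalgebra, and for (higher-rank) graph algebras this is a theorem with genuine content --- one has to cut an arbitrary nonzero positive element down to something dominating a vertex projection, which uses aperiodicity and the conditional expectation onto the core, not just the presence of one proper isometry. The paper closes this step by citing \cite[Proposition~8.8]{sims}, whose hypotheses hold trivially for a one-vertex $2$-graph with at least two edges of each colour. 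You should either invoke that result (or the analogous statement in \cite{DY}) or reproduce its argument; as written, your final inference does not follow.
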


\begin{proof}
First note that $N_1^a=N_2^b$ if and only if $(N_1^2)^{a}=(N_2^2)^b$, so we can apply \cite[Theorem~3.4]{DY} (or Theorem~\ref{DYthm} below) to the graph $\OO_\eta$, and deduce that $\OO_\eta$ is aperiodic if and only if the condition on the bijections $\Lambda_{\eta}^{(a,0)}\to\Lambda_{\eta}^{(0,b)}$ holds.  Since graphs with a single vertex are trivially cofinal, \cite[Theorem~3.1 and Lemma~3.2]{rs} imply that $\OO_\eta$ is simple if and only if the condition in the theorem holds. Since the hypothesis of \cite[Proposition~8.8]{sims} trivially holds in our graphs, we know from that result that $\OO_\eta$ is purely infinite. Thus the result follows from Theorem~\ref{ExelcpofOtheta}.
\end{proof}

\begin{remark}
If $\ln N_1$ and $\ln N_2$ are not rationally related, then there are no such bijections $\rho$, our condition is trivially satisfied, and Corollary~\ref{simple and pi for core system} says that $\OO_\theta^\gamma\rtimes_{\alpha,L}\N^2$ is purely infinite and simple.
\end{remark}

\appendix

\section{On Davidson and Yang's characterisation of periodicity}

In \cite[Theorem~3.1]{DY}, Davidson and Yang find a necessary and sufficient condition for a $2$-graph with one vertex to be periodic. In their proof, they view the graph as a semigroup $\F_\theta^+$, and use some concrete isometric representations of $\F_\theta^+$ which they and Power had previously constructed in \cite{DPY}. Here we give a short proof of their theorem that argues directly in terms of the graph. We use the finite-path formulation of periodicity from \cite{rs}.

\begin{thm}[Davidson and Yang]\label{DYthm}
Let $\Lambda$ be a finite $2$-graph with a single vertex and at least two edges of each colour. Then $\Lambda$ is periodic if and only if there are positive integers $a,b$ and a bijection $\gamma$ of $\Lambda^{(a,0)}$ onto $\Lambda^{(0,b)}$ such that, for each $(\mu,\nu) \in \Lambda^{(a,0)}\times\Lambda^{(0,b)}$, the path $\mu\nu$ in $\Lambda^{(a,b)}$ has $(0,b)+(a,0)$ factorisation $\gamma(\mu)\gamma^{-1}(\nu)$.
\end{thm}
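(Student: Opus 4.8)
The plan is to characterize periodicity of a single-vertex $2$-graph $\Lambda$ directly in terms of its finite paths, following the formulation in \cite{rs}. The key object is the factorization rule: every path of degree $(a,b)$ has a unique $(a,0)+(0,b)$ factorization and a unique $(0,b)+(a,0)$ factorization. Periodicity in the sense of \cite{rs} should amount to the existence of degrees $(a,0)\ne(0,b)$ that act the same way on infinite (equivalently, long finite) paths; since there is only one vertex, the combinatorics reduces to comparing the two colour-orderings of a path.

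First I would recall the finite-path criterion: $\Lambda$ is periodic precisely when there exist distinct degrees $p,q$ (which, for a single vertex with two colours, we may take to be $p=(a,0)$ and $q=(0,b)$ after using the constraint $N_1^a=N_2^b$ forcing $|\Lambda^{(a,0)}|=|\Lambda^{(0,b)}|$) such that every sufficiently long path with a $p$-initial segment and a $q$-initial segment are ``shift-equivalent''. Next I would translate this into a statement about the factorization map. Given the factorization property, a path $\lambda\in\Lambda^{(a,b)}$ can be written uniquely as $\mu\nu$ with $\mu\in\Lambda^{(a,0)}$, $\nu\in\Lambda^{(0,b)}$, and also uniquely as $\nu'\mu'$ with $\nu'\in\Lambda^{(0,b)}$, $\mu'\in\Lambda^{(a,0)}$. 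Periodicity should say that the induced assignment $\mu\mapsto\nu'$ (the ``red part'' of the reordering) depends only on $\mu$ and not on $\nu$, and likewise $\nu\mapsto\mu'$ depends only on $\nu$; this is exactly the condition that there is a bijection $\gamma:\Lambda^{(a,0)}\to\Lambda^{(0,b)}$ with $\mu\nu$ factoring as $\gamma(\mu)\gamma^{-1}(\nu)$.

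The forward direction (periodic $\Rightarrow$ bijection exists) I would obtain by taking the period and reading off $\gamma$ from the factorization of paths of degree $(a,b)$: periodicity forces the two factorizations to be related by a fixed ``swap'' that decouples the colours, yielding a well-defined bijection. The converse (bijection $\Rightarrow$ periodic) is the more concrete half: given $\gamma$, I would show the shift by $(a,0)$ and the shift by $(0,b)$ agree on every infinite path by inductively peeling off segments and using the decoupling $\mu\nu=\gamma(\mu)\gamma^{-1}(\nu)$ repeatedly, so that each translate is determined by the data in a way independent of position. I expect the main obstacle to be the bookkeeping in the converse: one must verify that the single bijection $\gamma$, applied blockwise along an infinite path, genuinely implements a global periodicity rather than merely a local reordering, which requires checking compatibility of $\gamma$ across successive factorization steps. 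This consistency is where the single-vertex hypothesis and the factorization property must be combined carefully, and I would isolate it as the technical heart of the argument.
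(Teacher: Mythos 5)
Your overall strategy is the same as the paper's -- work directly with the finite-path formulation of periodicity from \cite{rs} and read the bijection $\gamma$ off the two colour-orderings of paths of degree $(a,b)$ -- but as written there are two genuine gaps. The first is in the forward direction, where you assert that the period ``may be taken'' to consist of the degrees $p=(a,0)$ and $q=(0,b)$. This is not automatic: the definition of periodicity only provides some pair $m\neq n$ in $\N^2$, and one must rule out the possibility that $m\leq n$ (or $n\leq m$). That step is exactly where the hypothesis of at least two edges of each colour enters: if $m\le n$, there are at least two paths of degree $n-m$, so any $\lambda$ of degree $n$ can be extended by a path of degree $n-m$ different from $\lambda(m,n)$, producing a path that violates the periodicity identity. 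Your proposal never invokes this hypothesis, and your appeal to $N_1^a=N_2^b$ runs in the wrong direction: the equality of cardinalities $|\Lambda^{(a,0)}|=|\Lambda^{(0,b)}|$ is a \emph{consequence} of the existence of the bijection, not a tool available for choosing the degrees at the outset.

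The second gap is that the two computations you correctly identify as necessary -- the well-definedness of $\gamma$ (that the degree-$(0,b)$ initial segment of the reordering of $\mu\nu$ depends only on $\mu$ and not on $\nu$), and the propagation of the local identity $\mu\nu=\gamma(\mu)\gamma^{-1}(\nu)$ to global periodicity in the converse -- are described but not carried out; indeed you explicitly defer the latter as ``the technical heart.'' The paper handles both with a single device: a path $\alpha\mu\nu\sigma$ of degree $(2a,2b)$ viewed as a $2\times2$ block diagram. In the forward direction, periodicity in the form of \eqref{easierperiodic} identifies the top-left and bottom-right corners, which shows that $(\alpha\mu)((a,0),(a,b))$ is independent of $\alpha$ and $(\mu\beta)((0,0),(0,b))$ is independent of $\beta$ (so $\gamma$ is well defined), and the analogous construction gives $\delta=\gamma^{-1}$ and the factorisation $\mu\nu=\gamma(\mu)\delta(\nu)$. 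In the converse, the same diagram, together with the derived relation $\alpha\beta=\gamma^{-1}(\alpha)\gamma(\beta)$ applied to the adjacent blocks, shows that the bottom-right corner of each $(2a,2b)$-block reproduces the top-left corner, which is precisely the inductive step needed to verify \eqref{easierperiodic} on arbitrarily long paths. Until you supply arguments at this level of detail, the proposal is a correct plan rather than a proof.
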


\begin{proof}
Suppose that $\Lambda$ is periodic. Then there exist $m,n\in \N^2$ such that $m\not=n$ and
\begin{equation}\label{defperiodic}
\mu(m,m+d(\mu)-(m\vee n))=\mu(n,n+d(\mu)-(m\vee n))\quad\text{for all $\mu$ with $d(\mu)\geq m\vee n$.}
\end{equation}
We claim that we cannot have $m\leq n$. To see this, suppose that $m\leq n$ and $\lambda$ is any path with $d(\lambda)=n$. Then our hypotheses imply that there are at least $2$ paths of degree $n-m$, and in particular there is one $\nu$ which is not equal to $\lambda(m,n)$. Then $\mu:=\lambda\nu$ does not satisfy \eqref{defperiodic}. So we cannot have one of $m$ or $n$ larger than the other, and (possibly after swapping $m$ and $n$) there exist $a$ and $b$ positive such that $m=(m\wedge n )+(a,0)$ and $n=(m\wedge n )+(0,b)$. Now taking a path $\nu$ with $d(\nu)=m\wedge n$ and applying \eqref{defperiodic} to $\mu=\nu\lambda$ shows that
\begin{equation}\label{easierperiodic}
\lambda((a,0),d(\lambda)-(0,b))=\lambda((0,b),d(\lambda)-(a,0))\quad\text{for all $\lambda$ with $d(\lambda)\geq (a,b)$.}
\end{equation}

We next claim that for each $\mu\in \Lambda^{(a,0)}$, there is a unique $\gamma(\mu)\in \Lambda^{(0,b)}$ such that
\begin{equation}\label{defgamma}
(\alpha\mu)((a,0),(a,b))=\gamma(\mu)=(\mu\beta)((0,0),(0,b))\quad\text{for all $\alpha,\beta\in\Lambda^{(0,b)}$.}
\end{equation}
The uniqueness in the factorisation property implies that there is at most one such $\gamma(\mu)$. To see there is one, take $\alpha,\beta\in\Lambda^{(0,b)}$, and consider $\lambda:=\alpha\mu\beta\in\Lambda^{(a,2b)}$. Then
\begin{align*}
(\alpha\mu)((a,0),(a,b))&=(\alpha\mu\beta)((a,0),(a,b))\\
&=(\alpha\mu\beta)((0,b),(0,2b))\qquad\text{by \eqref{easierperiodic}}\\
&=(\mu\beta)((0,0),(0,b)).
\end{align*}
Since the first term is independent of $\beta$ and the last term is independent of $\alpha$, the claim follows.

Swapping the roles of blue and red paths in the argument of the preceding paragraph shows that for each $\nu\in \Lambda^{(0,b)}$, there is a unique $\delta(\nu)\in \Lambda^{(a,0)}$ such that
\begin{equation}\label{defdelta}
(\tau\nu)((0,b),(a,b))=\delta(\nu)=(\nu\sigma)((0,0),(a,0))\quad\text{for all $\tau,\sigma\in\Lambda^{(a,0)}$.}
\end{equation} 
We claim that $\delta$ is an inverse for $\gamma$. Take $\mu\in \Lambda^{(a,0)}$. To compute $\gamma(\mu)$, we pick any $\alpha\in\Lambda^{(0,b)}$, and then \eqref{defgamma} gives $\gamma(\mu)=(\alpha\mu)((a,0),(a,b))$. To compute $\delta(\gamma(\mu))$, we take $\tau=(\alpha\mu)((0,0),(a,0))$. Then $\tau\gamma(\mu)=\alpha\mu$, and \eqref{defdelta} gives
\[
\delta(\gamma(\mu))=(\tau\gamma(\mu))((0,b),(a,b))=(\alpha\mu)((0,b),(a,b))=\mu.
\]
A similar argument shows that $\gamma\circ\delta$ is the identity. We have now proved that $\gamma$ is a bijection with $\gamma^{-1}=\delta$.

We next need to show that $\mu\nu=\gamma(\mu)\delta(\nu)$. Take $\alpha\in\Lambda^{(0,b)}$ and $\sigma\in\Lambda^{(a,0)}$, and consider the path $\lambda:=\alpha\mu\nu\sigma$: the diagram in Figure~\ref{figpath}\begin{figure}[h]
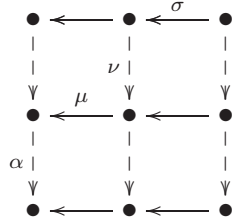

\centerline{
\xygraph{
{\bullet}="v43":[l]{\bullet}="v33":[l]{\bullet}="v23"
"v43":@{-->}[d]{\bullet}="v42":@{-->}[d]{\bullet}="v41"
"v33":@{-->}[d]{\bullet}="v32"_\nu:@{-->}[d]{\bullet}="v31"
"v23":@{-->}[d]{\bullet}="v22":@{-->}[d]{\bullet}="v21"_\alpha
"v43":"v33"_\sigma:"v23"
"v42":"v32":"v22"_\mu
"v41":"v31":"v21"
}}
\caption{The path $\lambda$  of degree $(2a,2b)$.}\label{figpath}
\end{figure}
should help see what is happening: the unbroken arrows represent blue paths of length $b$ and the dashed arrows red paths of length $a$. Periodicity in the form of \eqref{easierperiodic} tells us that the top left-hand corner in Figure~\ref{figpath}, which is $\mu\nu=\lambda((0,b),(a,2b))$, is the same as the bottom right-hand corner
\begin{align*}
\lambda((a,0),(2a,b))&=(\alpha\mu\nu\sigma)((a,0),(a,b))(\alpha\mu\nu\sigma)((a,b)(2a,b))\\
&=(\alpha\mu)((a,0),(a,b))(\nu\sigma)((0,0),(a,0)),
\end{align*}
which by \eqref{defgamma} and \eqref{defdelta} is precisely $\gamma(\mu)\delta(\nu)$. Thus $\gamma$ has all the required properties.

Now suppose that we have $a$, $b$ and $\gamma$ as in the theorem. Because $\gamma$ is a bijection, we also have 
\begin{equation}\label{altfactor}
\alpha\beta=\gamma^{-1}(\alpha)\gamma(\beta)\quad\text{for $\alpha\in\Lambda^{(0,b)}$, $\beta\in\Lambda^{(a,0)}$.}
\end{equation}
We will prove that $\Lambda$ is periodic by showing that every $\lambda\in\Lambda$ with $d(\lambda)\geq (a,b)$ satisfies \eqref{easierperiodic}. Take such a $\lambda$. Because it suffices to prove \eqref{easierperiodic} for a longer path $\lambda\mu$, we may suppose that $d(\lambda)=(Ma,Nb)$ for some positive integers $M,N$, and then it suffices to prove that
\begin{equation}\label{keybits}
\lambda\big((ma,(n+1)b),((m+1)a,(n+2)b)\big)=\lambda\big(((m+1)a,nb),((m+2)a,(n+1)b)\big)
\end{equation}
for every pair $m,n$ satisfying $0\leq m\leq M-2$ and $0\leq n\leq N-2$. Factor
\[
\lambda\big((ma,nb),((m+2)a,(n+2)b)\big)=\alpha\mu\nu\sigma\quad \text{with $\mu,\sigma\in\Lambda^{(a,0)}$ and $\alpha,\nu\in\Lambda^{(0,b)}$,}
\]
and then we are back in the situation of Figure~\ref{figpath}. The top left corner of Figure~\ref{figpath} has factorisations $\mu\nu=\gamma(\mu)\gamma^{-1}(\nu)$. The relation \eqref{altfactor} implies that the top right and bottom left corners have factorisations $\nu\sigma=\gamma^{-1}(\nu)\gamma(\sigma)$ and $\alpha\mu=\gamma^{-1}(\alpha)\gamma(\mu)$. Together, these imply that the bottom right-hand corner is $\gamma(\mu)\gamma^{-1}(\nu)$, which is the same as the top left corner. This says that $\lambda$ satisfies \eqref{keybits}, as required.
\end{proof}

\end{document}